\documentclass[reqno]{amsart}
\usepackage[T1]{fontenc}
\usepackage{amsmath, amssymb, amsthm, amsfonts}
\usepackage{thmtools}
\usepackage{mathrsfs}

\usepackage{lmodern}
\usepackage{fontawesome5}
\usepackage{booktabs}
\usepackage{caption}
\usepackage{float}

\allowdisplaybreaks

\usepackage{graphicx}
\usepackage[dvipsnames]{xcolor}
\usepackage{mdframed}
\usepackage{tikz-cd}

\usepackage{fancyhdr}
\usepackage[hang,flushmargin]{footmisc}

\usepackage{enumitem}
\usepackage{mathtools}

\usepackage[alphabetic, initials, msc-links]{amsrefs}
\makeatletter
\renewcommand{\BibLabel}{%
    \Hy@raisedlink{\hyper@anchorstart{cite.\CurrentBib}\hyper@anchorend}%
    [\thebib]%
}
\makeatother
\makeatletter
    \renewcommand{\MR}[1]{\ (\href{https://mathscinet.ams.org/mathscinet-getitem?mr=MR#1}{MR#1})}
\makeatother

\usepackage{hyperref}
\hypersetup{
    colorlinks=true,
    linkcolor=-Cerulean,
    filecolor=magenta,
    urlcolor=Cerulean,
    citecolor=Cerulean
}

\graphicspath{ {./Images/} }

\definecolor{darkorange}{RGB}{213,94,0}
\definecolor{darkblue}{RGB}{0,114,178}

\usepackage[capitalize, nameinlink]{cleveref}
\crefname{equation}{Equation}{Equations}
\crefname{prop}{Proposition}{Propositions}
\crefname{section}{Section}{Sections}
\crefname{figure}{Figure}{Figures}

\theoremstyle{definition}
\newtheorem{theorem}{Theorem}[section]
\newtheorem{lemma}[theorem]{Lemma}
\newtheorem{definition}[theorem]{Definition}
\newtheorem{example}[theorem]{Example}
\newtheorem{proposition}[theorem]{Proposition}

\newtheorem*{remark}{Remark}

\newtheorem{conjecture}[theorem]{Conjecture}

\newcommand{\R}{\mathbb{R}}

\DeclareMathOperator{\GL}{\mathrm{GL}}

\DeclareMathOperator{\std}{\mathrm{std}}

\DeclareMathOperator{\Frob}{\mathrm{Frob}}

\DeclareMathOperator{\Stab}{\mathrm{Stab}}
\DeclareMathOperator{\triv}{\mathrm{triv}}

\title[Cohomology of Iwahori Congruence Subgroups]{On the Cuspidal Cohomology of Iwahori Congruence Subgroups of $\mathrm{SL}(3, \mathbb{Z})$}
\author{Zachary Porat}
\address{
Department of Mathematics and Statistics\\
Bucknell University\\
380 Olin Science Building\\
Lewisburg, PA 17837 USA
}
\email{\href{mailto:z.porat@bucknell.edu}{\tt z.porat@bucknell.edu}}
\urladdr{\href{https://zporat.github.io}{\tt https://zporat.github.io}}

\subjclass{11F75, 11Y40, 11F67}

\begin{document}
\renewcommand{\thefootnote}{\fnsymbol{footnote}}
\begin{abstract}
We investigate automorphic forms for congruence subgroups of $\mathrm{SL}(3, \mathbb{Z})$ that are Iwahori at $p$.  In particular, we study the cuspidal cohomology of Iwahori congruence subgroups $\mathcal{I}(3, p)$, which are comprised of matrices in $\mathrm{SL}(3, \mathbb{Z})$ that are upper triangular modulo $p$.  In order to work with $\mathcal{I}(3, p)$, we generalize key results from Ash, Grayson, and Green [J.\ Number Theory 19 (1984), pp.\ 412-436]
. For levels $\mathcal{I}(3, p)$ with $p \leq 227$, we found three levels with nonzero cuspidal classes and were able to compute the action of Hecke operators at two of these levels.  These are the first examples of non-essentially-self-dual automorphic representations of trivial cohomological weight that are Steinberg at $p$ appearing at Iwahori level. 
\end{abstract}

\maketitle

\section{Introduction} \label{section:intro}
Let $G$ be a reductive algebraic group, $X$ the associated symmetric space for $G$, and $\Gamma$ an arithmetic subgroup of $G$.  Automorphic forms arise in the cohomology of certain compactifications of $X/\Gamma$.  For example, in the classical setting where $G = \mathrm{GL}(2)$, $X$ is the upper half plane, and $\Gamma$ denotes the congruence subgroup $\Gamma_0(2,N)$, the quotient space $X/\Gamma$ is the modular curve $Y_0(N)$.  Then, one can study the space of weight $2$ cusp forms $S_2(\Gamma_0(2, N))$ by instead investigating specific classes in the cohomology of the Bailey-Borel compactification of $Y_0(N)$, which is the modular curve $X_0(N)$.

In the present article, we let $G = \mathrm{GL}(3)$.  Here, the associated symmetric space is $X = \mathrm{SO}(3) \backslash \mathrm{SL}(3, \mathbb{R})$ with $\mathrm{SL}(3, \mathbb{R})$ acting on the right, a natural generalization of the upper half plane from the $\mathrm{GL}(2)$ setting.  To study \textbf{cuspidal automorphic forms} on $\mathrm{GL}(3)$, which are of primary interest, we examine the cohomology of the Borel-Serre compactification of $X/\Gamma$.  The Borel-Serre compactification of $X/\Gamma$ is a compact manifold ``with corners'' (see \cite{BS73}), which we denote $X_\Gamma^\text{BS}$.  The interior of $X_\Gamma^\text{BS}$ is $X/\Gamma$, but the corners complicate its boundary. For example, the Baily-Borel compactification of $Y_0(N)$ involves adding a finite number of cusp points; by contrast, the cusp points are blown up to lines in the the Borel-Serre compactification of $Y_0(N)$. 

Several articles (e.g.\ \cite{AGG84}, \cite{vGvdKTV97}, \cite{Por25}) study the cohomology of $X_\Gamma^\text{BS}$ in the case of the \textbf{Hecke congruence subgroup}
\[\Gamma = \Gamma_0(3, N) = \{ (a_{ij}) \in \mathrm{SL}(3, \mathbb{Z}) \colon a_{21} \equiv a_{31} \equiv 0 \pmod N\}.\]
Given a cuspidal automorphic form $f$ on $\mathrm{GL}(3)$, there is a corresponding nonzero cohomology class $u_f$ in the \textbf{cuspidal cohomology} $H^3_\text{cusp}(X_{\Gamma}^\text{BS}, \mathbb{C})$, a subspace of the cohomology space 
$H^3(X_{\Gamma}^\text{BS}, \mathbb{C})$.  We call such a $u_f$ a \textbf{cuspidal class}.  If $u_f$ is an eigenclass for all Hecke operators $T_{A(\ell)}$ with complex eigenvalues $a_\ell$ for prime $\ell$, then 
\[
    L_\ell(f, s) = (1 - a_\ell \ell^{-s} + \overline{a_\ell} \ell^{1-2s} - \ell^{3 - 3s})^{-1}
\]
is the local factor of the associated $L$-function for $\ell \nmid N$, when $\Gamma_0(3, N)$ is the level of the form.  For levels $\Gamma_0(3, N)$ with $N < 2000$ for $N$ a rational prime, the aforementioned articles calculate several of these local factors. 

A natural question to ask is for what other congruence subgroups of $\mathrm{SL}(3, \mathbb{Z})$ can one perform analogous computations?  The purpose of this article is to investigate the so-called \textbf{Iwahori congruence subgroup} 
\[\mathcal{I}(n, p) = \{ (a_{ij}) \in \mathrm{SL}(n, \mathbb{Z}) \colon a_{ij} \equiv 0 \pmod p \text{ for all } 1 \leq j < i \leq n\},\]
i.e.\ the matrices in $\mathrm{SL}(n, \mathbb{Z})$ that are upper triangular modulo $p$. Note, we will primarily restrict our attention to $\mathcal{I}(3, p)$ with $p$ a rational prime.  The name of this subgroup arises from the fact that 
\[\mathcal{I}(n,p) = \mathrm{SL}(n, \mathbb{Z}) \cap I_p,\] 
where $I_p$ denotes the Iwahori subgroup of $G(\mathbb{Z}_p)$.  The Iwahori congruence subgroup is natural to consider as its reduction is the standard Borel subgroup of $\mathrm{GL}(3, \mathbb{F}_p)$.

Further, we note that a nonzero eigenclass that arises in the cuspidal cohomology $H^3_\text{cusp}(X_{\mathcal{I}(3,p)}^\text{BS}, \mathbb{C})$ corresponds to a cuspidal automorphic representation of $\mathrm{GL}(3, \mathbb{A})$ whose local component at $p$ is an unramified twist of the Steinberg representation.  Barrera Salazar, Graham, and Williams \cite{BSGW25} recently proved the Greenberg--Benois exceptional zero conjecture for such representations.  However, thus far no explicit examples appear in the literature.  This article serves to provide the first few examples of non-essentially-self-dual automorphic representations of trivial cohomological weight that are Steinberg at $p$ appearing at Iwahori level $\mathcal{I}(3, p)$ for primes $p < 127$ (see \cref{sec:iwahori-comp}).  In particular, for each $p \in \{19, 43\}$, we have found a non-essentially-self-dual automorphic representation of trivial cohomological weight such that $\pi_{p}$ is Steinberg.

The article is structured as follows.  We begin in \cref{sec:prelims} by recalling the necessary background information from \cite{AGG84}.  In order to work with $\mathcal{I}(3, p)$ as opposed to $\Gamma_0(3, p)$, we need to generalize key results from \cite{AGG84}, which is done in \cref{sec:iwahori-results}. We conclude in \cref{sec:iwahori-comp} with our computational results.  For level $\mathcal{I}(3, p)$ with $p \leq 227$, we found previously-unknown nonzero cuspidal classes at levels with $p \in \{19, 43\}$.  These are the first known cuspidal classes appearing at Iwahori level and correspond to the aforementioned non-essentially-self-dual automorphic representations.

We were able to calculate eigenvalues for desired Hecke operators for levels $\mathcal{I}(3, p)$ with $p \in \{19, 43\}$ (see \cref{table:4}), thus confirming that the classes found at these levels are in fact previously-unseen cuspidal classes not coming from the boundary homology or the symmetric square lifting.  However, though the dimension argument provided in \cref{sec:iwahori-results} strongly suggests the existence of previously-unseen cuspidal classes at level $\mathcal{I}(3, 127)$, computational constraints prevented us from finding the eigenvalues at this level. 

\section{Preliminaries} \label{sec:prelims}

Let $G = \mathrm{GL}(3)$ and consider the symmetric space $X = \mathrm{SL}(3)\backslash \mathrm{SL}(3, \mathbb{R})$, where $\mathrm{SL}(3, \mathbb{R})$ acts on the right.  Our primary objective is to study the space of cuspidal automorphic forms for certain congruence subgroups $\Gamma \leq \mathrm{SL}(3, \mathbb{Z})$.  To understand the space of cuspidal automorphic forms, we study a subspace of the cohomology space $H^3(X_\Gamma^\text{BS}, \mathbb{C})$ called the \textbf{cuspidal cohomology}:
\[H^3_\text{cusp}(\Gamma, \mathbb{C}) = \{u_f \in H^3(X_\Gamma^\text{BS}, \mathbb{C}) \colon u_f \text{ with $f$ a cuspidal automorphic form}\}.\]
\noindent We note that the cohomology of $\Gamma$ and $X_\Gamma^{\text{BS}}$ with complex coefficients are canonically isomorphic and can be identified, as done in \cite{AGG84}.  Similarly, their homology spaces can be identified.  Therefore, we will henceforth consider the spaces $H^3(\Gamma, \mathbb{C})$ and $H_3(\Gamma, \mathbb{C})$. 

We have two main objectives.  First, we want to find previously-unseen cuspidal automorphic forms, which are incredibly rare in this setting.  That is, we want to find cuspidal automorphic forms on $\mathrm{GL}(3)$ that are not lifts from the space of weight 2 cusp forms $S_2(\Gamma_0(2, N))$.  We know how to compute the dimension of forms that are lifts (see \cref{sec:iwahori-boundary}), which allows us to compute the dimension of the cuspidal cohomology.  If the dimension of the cuspidal cohomology is different than expected, then we have identified a previously-unseen $\mathrm{GL}(3)$ cuspidal automorphic form.  After finding a previously-unseen form, we next wish to compute the action of \textbf{Hecke operators} on this form, which is done by computing the action on the corresponding cohomology class.  

\subsection{Finding Forms} \label{sec:prelims-finding}
Our process for finding $\mathrm{GL}(3)$ cuspidal automorphic forms requires calculating the dimension of the cuspidal cohomology $H^3_\text{cusp}(\Gamma, \mathbb{C})$.  To compute the dimension of $H^3_\text{cusp}(\Gamma, \mathbb{C})$, we adapt \cite{AGG84}*{Proposition 2.1} as follows:

\begin{proposition}[\cite{AGG84}*{Prop.\ 2.1}] \label{prop:AGG-2.1}
Let $\Gamma$ be a congruence subgroup of $\mathrm{SL}(3, \mathbb{Z})$.  Further, let $X_\Gamma^{\text{BS}}$ be the Borel-Serre compactification of $X/\Gamma$, and let $\partial X_\Gamma^\text{BS}$ denote the boundary of $X_\Gamma^{\text{BS}}$.  Then, 
    \[\dim H_3(\Gamma, \mathbb{C}) = \dim H^3_{\mathrm{cusp}}(\Gamma, \mathbb{C}) + \dim H_3 (\partial X_\Gamma^\text{BS}, \mathbb{C}).\]
\end{proposition}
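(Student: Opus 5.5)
I will derive the formula from the long exact sequence of the pair $(X_\Gamma^{\text{BS}}, \partial X_\Gamma^{\text{BS}})$ together with Poincaré--Lefschetz duality and the Borel/Franke description of $H^3$ as cuspidal classes plus boundary-detected classes. Throughout I will work with a torsion-free finite-index subgroup $\Gamma' \trianglelefteq \Gamma$, so that $X_{\Gamma'}^{\text{BS}}$ is a compact orientable $5$-manifold with boundary (since $\dim X = 5$). With $\mathbb{C}$ coefficients, all (co)homology groups for $\Gamma$ are the $\Gamma/\Gamma'$-invariants of those for $\Gamma'$. Taking invariants is exact, so it suffices to prove the identity for $\Gamma'$ equivariantly, and then pass to invariants.

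\textbf{Step 1: the interior cohomology.} Define the interior cohomology
\[
H^3_!(\Gamma,\mathbb{C}) = \ker\bigl(H^3(X_\Gamma^{\text{BS}},\mathbb{C}) \to H^3(\partial X_\Gamma^{\text{BS}},\mathbb{C})\bigr),
\]
which is the image of compactly supported cohomology. The exact sequence of the pair gives
\[
\dim H^3(\Gamma) = \dim H^3_! + \dim \operatorname{im}\bigl(H^3(X_\Gamma^{\text{BS}}) \to H^3(\partial X_\Gamma^{\text{BS}})\bigr).
\]
I will then invoke the known structure theorem for $\mathrm{SL}(3)$ with trivial coefficients: there is no residual (non-cuspidal $L^2$) contribution in degree $3$, so $H^3_! = H^3_{\mathrm{cusp}}$. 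This is the result used in \cite{AGG84}, going back to Borel and Lee--Schwermer.

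\textbf{Step 2: surjectivity onto the boundary in degree $3$.} I need to show that the restriction map $H^3(X_\Gamma^{\text{BS}}) \to H^3(\partial X_\Gamma^{\text{BS}})$ is surjective. Equivalently, the connecting map $H^3(\partial) \to H^4(X_\Gamma^{\text{BS}},\partial)$ must vanish. Dually, the boundary map $H_3(X_\Gamma^{\text{BS}},\partial) \to H_2(\partial)$ must vanish, or the inclusion $H_3(\partial)\to H_3(X_\Gamma^{\text{BS}})$ must be injective.

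I expect this to be the main obstacle. I would first use the half-dimension principle: the image of $H^*(X_\Gamma^{\text{BS}})\to H^*(\partial)$ is a maximal isotropic subspace under the Poincaré pairing on the closed $4$-manifold $\partial$. This gives $\dim \operatorname{im} = \tfrac12 \dim H^*(\partial)$ in total, but not degree by degree. To pin down the degree-$3$ part, I would use the explicit description of $\partial X_\Gamma^{\text{BS}}$ for $\mathrm{SL}(3)$ as a union of faces for the parabolics (nilmanifold bundles over modular curves, glued along Borel torus bundles). I would combine this with the vanishing $H^4(\Gamma,\mathbb{C})=0$, which holds because $\Gamma$ has virtual cohomological dimension $3$. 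From $H^4(\Gamma)=0$, the sequence gives that $H^3(\partial)\to H^4(X_\Gamma^{\text{BS}},\partial)$ is surjective. By duality, $H^4(X_\Gamma^{\text{BS}},\partial)\cong H_1(\Gamma)$, which is zero up to finite groups (congruence subgroup property and Kazhdan's property (T) for $\mathrm{SL}(3,\mathbb{Z})$). Hence this connecting map is zero, and so $H^3(X_\Gamma^{\text{BS}}) \to H^3(\partial)$ is surjective.

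\textbf{Step 3: conclusion.} Combining the previous steps gives
\[
\dim H^3(\Gamma) = \dim H^3_{\mathrm{cusp}}(\Gamma) + \dim H^3(\partial X_\Gamma^{\text{BS}}).
\]
Finally, over $\mathbb{C}$ the universal coefficient theorem gives $\dim H^3 = \dim H_3$ for both $\Gamma$ and $\partial X_\Gamma^{\text{BS}}$, which yields the stated formula. Since \cref{prop:AGG-2.1} is quoted from \cite{AGG84}, where it is stated for general congruence subgroups, the argument is really a check that none of these steps used the specific level $\Gamma_0(3,N)$.
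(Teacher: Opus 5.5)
Your proposal is correct. The paper gives no argument of its own here; the statement is imported from \cite{AGG84}. The standard proof behind that citation has exactly your skeleton:
\begin{itemize}
\item the long exact sequence of the pair $(X_\Gamma^{\mathrm{BS}},\partial X_\Gamma^{\mathrm{BS}})$;
\item the deep input that, in degree $3$ with trivial coefficients for $\mathrm{SL}(3)$, interior cohomology equals cuspidal cohomology (Borel, Lee--Schwermer);
\item surjectivity of restriction to the boundary;
\item $\dim H^3=\dim H_3$ over $\mathbb{C}$.
\end{itemize}
So the one nontrivial fact you take on faith, $H^3_!=H^3_{\mathrm{cusp}}$, is precisely what the citation supplies. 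That is the same level of rigor the paper adopts.

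Your Step 2 can be cut down considerably. It consists entirely of the observation
\[
H^4(X_{\Gamma'}^{\mathrm{BS}},\partial;\mathbb{C})\cong H_1(\Gamma';\mathbb{C})=0 .
\]
This follows from Lefschetz duality on a compact orientable $5$-manifold with corners, together with property (T). Orientability holds because the connected group $\mathrm{SL}(3,\mathbb{R})$ acts on $X$ by orientation-preserving maps. The vanishing kills the connecting map out of $H^3(\partial)$. It also descends to $\Gamma$ with torsion, because invariants of $0$ are $0$. The half-dimension principle, the face structure of the boundary, and $H^4(\Gamma,\mathbb{C})=0$ are never actually used and can be dropped.

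One small slip: the linear dual of $\delta\colon H^3(\partial)\to H^4(X_\Gamma^{\mathrm{BS}},\partial)$ is $H_4(X_\Gamma^{\mathrm{BS}},\partial)\to H_3(\partial)$, not $H_3(X_\Gamma^{\mathrm{BS}},\partial)\to H_2(\partial)$. Your other reformulation, injectivity of $H_3(\partial)\to H_3(X_\Gamma^{\mathrm{BS}})$, is the right one.
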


\noindent Ash, Grayson, and Green \cite{AGG84}*{Section 3} show that homology space $H_3(\Gamma, \mathbb{C})$, which is dual to $H^3(\Gamma, \mathbb{C})$, is isomorphic to a certain vector space of functions that they denote $W(\Gamma)$.  Therefore, we can determine the dimension of $H_3(\Gamma, \mathbb{C})$ by computing the dimension of $W(\Gamma)$.  In turn, we will be able to determine the dimension of $H^3_\text{cusp}(\Gamma, \mathbb{C})$ should we be able to calculate the dimension of $H_3 (\partial X_\Gamma^\text{BS}, \mathbb{C})$.  We start by recalling the necessary details to establish the correspondence between $H_3(\Gamma, \mathbb{C})$ and the vector space $W(\Gamma)$, and then turn our attention to finding the dimension of $H_3 (\partial X_\Gamma^\text{BS}, \mathbb{C})$.

\begin{definition}
    Let $e_1, \ldots, e_n$ be the standard basis of column vectors in $n$-space.  For any ring $R$, we will consider $\sigma_1, \sigma_2, h$ as matrices in $\mathrm{SL}(n, R)$ such that 
    \[\sigma_1 e_1 = e_2, \quad \sigma_1 e_2 = -e_1, \quad \sigma_1 e_i = e_i \quad \text{if } i \geq 3;\]
    \[\sigma_2 e_n = (-1)^{n + 1} e_1, \quad \sigma_2 e_i = e_{i+1} \quad \text{if } 1 \leq i < n;\]
    \[he_1 = e_2, \quad h e_2 = -e_1 - e_2, \quad he_i = e_i \quad \text{if } i \geq 3.\]
    In our setting, where $n = 3$ and $R = \mathbb{Z}$, we have the matrices
    \[\sigma_1 = 
    \begin{pmatrix}
        0 & -1 & 0 \\
        1 & 0 & 0 \\
        0 & 0 & 1
    \end{pmatrix}, 
    \quad \sigma_2 = 
    \begin{pmatrix}
        0 & 0 & 1 \\
        1 & 0 & 0 \\
        0 & 1 & 0
    \end{pmatrix}, 
    \quad h = 
    \begin{pmatrix}
        0 & -1 & 0 \\
        1 & -1 & 0 \\
        0 & 0 & 1
    \end{pmatrix}.
    \]
\end{definition}

\begin{definition}[\cite{AGG84}*{Def.\ 3.1}] \label{def:AGG-3.1}
    For any subgroup $\Gamma$ in $\mathrm{SL}(n, \mathbb{Z})$, we define $W(\Gamma)$ to be the vector space of functions $f \colon \mathrm{SL}(n, \mathbb{Z}) \to \mathbb{C}$ with compact support modulo $\Gamma$ that satisfy:
    \begin{enumerate}
        \item $f(m \gamma) = f(m)$ for all $\gamma \in \Gamma$;
        \item $f(\sigma_1 m) = -f(m)$;
        \item $f(\sigma_2 m) = (-1)^{n + 1} f(m)$;
        \item $f(m) + f(hm) + f(h^2 m) = 0$;
    \end{enumerate}
    for all $m \in \mathrm{SL}(n, \mathbb{Z})$.
\end{definition}

\begin{theorem}[\cite{AGG84}*{Thm.\ 3.2}] \label{thm:AGG-3.2}
Let $n = 2, 3, $ or $4$.  Let $N = \frac{1}{2} n (n-1)$.  Suppose $\Gamma \subseteq \mathrm{SL}(n, \mathbb{Z})$.  Then, there is a natural isomorphism 
\[\Phi \colon W(\Gamma) \to H_N(\Gamma, \mathbb{C}).\]  
If $\Gamma$ is normal in $\mathrm{SL}(n, \mathbb{Z})$, this is an isomorphism of $\mathrm{SL}(n, \mathbb{Z})$-modules, with $W(\Gamma)$ given the structure of an $\mathrm{SL}(n, \mathbb{Z})$-module by the action 
\[(m_2 \cdot f)(m_1) = f(m_1 m_2) \quad \text{ for } m_1, m_2 \in \mathrm{SL}(n, \mathbb{Z}),\]
which factors through $\mathrm{SL}(n, \mathbb{Z}) / \Gamma$.
\end{theorem}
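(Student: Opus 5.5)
The isomorphism of \cref{thm:AGG-3.2} comes from Borel--Serre duality together with the combinatorics of the modular symbols complex. Since $X_\Gamma^{\text{BS}}$ has virtual cohomological dimension $d = \dim X - (n-1)$, duality gives
\[H_N(\Gamma,\mathbb{C}) \cong H^{d-N}(\Gamma, \mathrm{St}_n\otimes\mathbb{C}),\]
where $\mathrm{St}_n = \tilde H_{n-2}(T_n)$ is the Steinberg module, the reduced homology of the Tits building $T_n$ of $\mathbb{Q}^n$. For $n=2,3,4$ the numerology gives $d = N$; for example, $n=3$ gives $\dim X = 5$, so $d = 3 = N$. Hence $H_N(\Gamma,\mathbb{C}) \cong H^0(\Gamma,\mathrm{St}_n\otimes\mathbb{C})$. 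Because $\mathbb{C}$ has characteristic zero, the finite-index torsion issues vanish, and this in turn equals $H_0(\Gamma, \mathrm{St}_n \otimes \mathbb{C})$, the coinvariants, up to the same duality convention. The strategy is therefore to present $\mathrm{St}_n\otimes\mathbb{C}$ as an $\mathrm{SL}(n,\mathbb{Z})$-module by generators and relations, and to read off $W(\Gamma)$ as the dual of the coinvariants.

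The key step is the Bykovskii-type presentation of $\mathrm{St}_n$ by unimodular modular symbols. By Ash--Rudolph, $\mathrm{St}_n$ is generated by the symbols $[m]$ for $m\in \mathrm{SL}(n,\mathbb{Z})$, where $[m]$ is the apartment class attached to the columns of $m$. Thus $\mathrm{St}_n$ is a quotient of $\mathbb{Z}[\mathrm{SL}(n,\mathbb{Z})]$, via left multiplication viewed as a right module in $m$ after transposing conventions. I would then show that, for $n\le 4$, the kernel is generated by exactly three kinds of relations. The first two are the sign relations under permuting columns, encoded by $\sigma_1$ and $\sigma_2$, which generate the permutation action together with the orientation signs $-1$ and $(-1)^{n+1}$. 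The third is the three-term relation $[m]+[hm]+[h^2m]=0$ coming from the unimodular $\mathrm{SL}(2)$ triangle relation in the first two coordinates. With the presentation in hand, $\mathrm{Hom}_\Gamma(\mathrm{St}_n\otimes\mathbb{C},\mathbb{C})$ becomes precisely the set of functions on $\Gamma\backslash\mathrm{SL}(n,\mathbb{Z})$ satisfying (2)--(4). Finite support modulo $\Gamma$ is automatic when $\Gamma$ has finite index, and otherwise it corresponds to the homology side of the duality. This yields the isomorphism $\Phi$. Concretely, $\Phi(f)$ is the cycle $\sum_{m\in \mathrm{SL}(n,\mathbb{Z})/\Gamma} f(m)\,[m]$ evaluated via the duality, and naturality follows from this construction.

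The main obstacle is proving that these relations are complete, i.e.\ that they present $\mathrm{St}_n$ for $n\le4$. I would do this through the well-rounded retract: $X$ deformation retracts $\mathrm{SL}(n,\mathbb{Z})$-equivariantly onto a cell complex of dimension $N$ (Soulé for $n=3$, Lee--Szczarba for $n=4$, the Farey tree for $n=2$). Its top cells are indexed by cosets $\mathrm{SL}(n,\mathbb{Z})/\mathrm{Stab}$ with stabilizer generated by the $\sigma_i$, and its codimension-one faces yield the $h$ relations. I would then compute $H_N$ of the quotient cellular chain complex directly: cycles in the top degree are exactly the functions in $W(\Gamma)$, because there are no $(N{+}1)$-cells, so $H_N$ equals the top-degree cycles. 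Checking the boundary map against the incidence data of these explicit complexes is where the restriction $n\le 4$ enters.

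Finally, for $\Gamma$ normal, the right action $(m_2\cdot f)(m_1)=f(m_1m_2)$ preserves conditions (1)--(4), since (2)--(4) are left conditions and (1) is preserved by normality. It corresponds on cells to translation by $m_2$, which matches the conjugation action of $\mathrm{SL}(n,\mathbb{Z})/\Gamma$ on $H_N(\Gamma,\mathbb{C})$. So $\Phi$ is equivariant.
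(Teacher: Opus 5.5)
\emph{Note.} The paper does not prove this statement. It quotes it from \cite{AGG84}, where the isomorphism comes from the well-rounded retract $W$; that is what the clause ``compact support modulo $\Gamma$'' and the restriction $n\le 4$ reflect. Your third paragraph carries out that computation, and it should carry the whole proof.

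\emph{The duality framing has real errors.} Borel--Serre duality gives
$H_N(\Gamma,\mathbb{C})\cong H^0(\Gamma,\mathrm{Hom}(\mathrm{St}_n,\mathbb{C}))=\mathrm{Hom}_\Gamma(\mathrm{St}_n,\mathbb{C})$,
not $H^0(\Gamma,\mathrm{St}_n\otimes\mathbb{C})$. The latter is a space of invariants and is zero for finite-index $\Gamma$. Working in characteristic zero does not identify invariants with coinvariants of an infinite group.

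Once corrected, your route does work for finite-index $\Gamma$ if combined with Bykovskii's presentation. That presentation holds for every $n$, so it is not the source of $n\le 4$. For such $\Gamma$ it identifies $\mathrm{Hom}_\Gamma(\mathrm{St}_n,\mathbb{C})$ with $W(\Gamma)$. Duality identifies the same space with the dual of $H^N(\Gamma,\mathbb{C})\cong(\mathrm{St}_n\otimes\mathbb{C})_\Gamma$, which is $H_N(\Gamma,\mathbb{C})$.

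It cannot give the statement for arbitrary $\Gamma\subseteq\mathrm{SL}(n,\mathbb{Z})$. For $\Gamma=\{1\}$, both $H_N(\{1\},\mathbb{C})$ and $W(\{1\})$ vanish (no finitely supported top cycles on the contractible $W$), while $\mathrm{Hom}(\mathrm{St}_n,\mathbb{C})\neq 0$. So the finite-support condition is not supplied by any ``homology side of the duality.'' The retract computation handles every $\Gamma$ uniformly and needs no presentation of $\mathrm{St}_n$.

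A smaller correction: $\Phi(f)$ is $\sum_m f(m)$ times the top cell of $W$ attached to $m$. The modular symbol $[m]$ is the dual relative cycle meeting that cell once; that is the intersection formula the paper quotes later.

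\emph{Two points in the retract argument are misdescribed.}

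First, the input that needs $n\le 4$ is a lattice fact: if the minimal vectors of a form are exactly $n$ linearly independent pairs, they form a $\mathbb{Z}$-basis. This fails for $n=5$, e.g.\ for $\mathbb{Z}^5+\mathbb{Z}\cdot\tfrac12(1,\dots,1)$. It is what identifies top cells with unimodular frames, whose stabilizer is the group of determinant-one signed permutations generated by $\sigma_1,\sigma_2$. You must still check that these act on the cell with orientation characters $-1$ and $(-1)^{n+1}$.

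Second, and more seriously, the codimension-one cells do not all give the $h$-relation. Take $n=3$ and coordinates $Q_{12},Q_{13},Q_{23}$ with $Q_{ii}=1$. The top cell of the standard frame is the cube $|Q_{ij}|\le\tfrac12$ truncated at alternate corners.
\begin{enumerate}
\item Its six hexagonal faces, where some $e_i\pm e_j$ becomes minimal, are each shared by three top cells. They give exactly (4).
\item Its four triangular faces, where some $e_1\pm e_2\pm e_3$ becomes minimal (e.g.\ all $Q_{ij}=-1/3$), carry minimal vectors $a,b,c,d$ with $a+b+c+d=0$ and any three a basis. Four top cells meet there, and the cycle condition is the four-term relation $[b,c,d]-[a,c,d]+[a,b,d]-[a,b,c]=0$.
\end{enumerate}
For $n=4$, four- and five-term conditions also occur.

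So a priori the top cycles form only a subspace of $W(\Gamma)$. Equality requires deriving these longer relations from (2)--(4). They do follow: put $u=a+b$, so $c+d=-u$. The four-term relation is then a signed sum of the three-term relations $[x,y,z]=[x,x+y,z]+[x+y,y,z]$ for $(x,y,z)=(a,b,c),(a,b,d),(c,d,a),(c,d,b)$.

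With this step and its $n=4$ analogue added, your last two paragraphs give a complete proof for every $\Gamma$.
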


Having established that $W(\Gamma) \simeq H_3(\Gamma, \mathbb{C})$, we now wish to understand how to compute the dimension of $H_3 (\partial X_\Gamma^\text{BS}, \mathbb{C})$, where $\partial X_\Gamma^\text{BS}$ denotes the boundary of $X_\Gamma^{\text{BS}}$. The authors in \cite{AGG84}*{Section 2} give a decomposition for $H_3 (\partial X_\Gamma^\text{BS}, \mathbb{C})$.  Let $T_n$ denote the Tits building for the group $\mathrm{SL}(n, \mathbb{Q})$, which is the simplicial complex with one vertex for every nontrivial subspace of $\mathbb{Q}^n$.  

\begin{proposition}[\cite{AGG84}*{Prop.\ 2.3}] \label{prop:AGG-2.3}
    Suppose $\Gamma \subseteq \mathrm{SL}(3, \mathbb{Z})$ has finite index.  Let $\mathcal{V} = T_3/\Gamma$.  Further, let $\mathcal{P}$ be a set of representatives of orbits of maximal parabolic $\mathbb{Q}$-subgroups of $\mathrm{SL}(3)$ under conjugation by $\Gamma$.  For each $P \in \mathcal{P}$, let $L(P)$ denote the Levi component of $P$.  Let $\Gamma(P)$ denote the projection of $\Gamma$ onto the Levi component.  Then,
    \[H_3(\partial X_\Gamma^\text{BS}, \mathbb{C}) \simeq H^1(\mathcal{V}, \mathbb{C}) \oplus \bigoplus_{P \in \mathcal{P}} H^1_\mathrm{cusp} (\Gamma(P), \mathbb{C}).\]
\end{proposition}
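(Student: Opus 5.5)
We would derive the decomposition from the structure of the Borel--Serre boundary as a union of faces indexed by $\Gamma$-conjugacy classes of rational parabolic subgroups, using a Mayer--Vietoris spectral sequence. For $\mathrm{SL}(3)$ the proper rational parabolics are the maximal ones (stabilizers of a line or a plane in $\mathbb{Q}^3$) and the Borels (stabilizers of a flag). This is exactly the simplicial structure of $T_3$: vertices are maximal parabolics, edges are Borels. Accordingly, $\partial X_\Gamma^{\mathrm{BS}}$ is covered by closed faces $e'(P)/\Gamma_P$, one for each vertex of $\mathcal{V}=T_3/\Gamma$. Two such faces meet along $e'(B)/\Gamma_B$ exactly when the vertices are joined by an edge $B$ of $\mathcal{V}$, and there are no triple intersections. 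Here we assume, after passing to a torsion-free finite-index normal subgroup and taking invariants (harmless with $\mathbb{C}$ coefficients), that $\mathcal{V}$ is a genuine graph quotient with this description.

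Each face $e(P)/\Gamma_P$ for maximal $P$ is a nilmanifold bundle (the unipotent radical $U_P\cong\mathbb{Z}^2$ quotient, a $2$-torus) over the locally symmetric space of the Levi $\mathrm{GL}(2)$, i.e.\ a modular curve $Y(P)$ for $\Gamma(P)$. Each Borel face is a $3$-dimensional Heisenberg nilmanifold. The boundary is a closed $4$-manifold; by Poincaré duality $H_3(\partial)\cong H^1(\partial)$, so it suffices to compute $H^1$. The Mayer--Vietoris spectral sequence for the cover has $E_1^{p,q}$ with $p=0$ the sum over vertices of $H^q(\bar e(P))$ and $p=1$ the sum over edges of $H^q(e(B))$. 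Degree $1$ receives contributions from $E_2^{1,0}$ and $E_2^{0,1}$. Since all faces are connected, $E_2^{1,0}=H^1(\mathcal{V},\mathbb{C})$, the first summand. For $E_2^{0,1}$ we compute $H^1(\bar e(P))$. The $\mathbb{Z}^2$ fiber has $H^1$ the standard representation of $\Gamma(P)$ with no invariants, so by Leray, or by Kostant's theorem applied to the unipotent radical, $H^1(\bar e(P))\cong H^1(\Gamma(P),\mathbb{C})$. This in turn splits as $H^1_{\mathrm{cusp}}(\Gamma(P))\oplus H^1_{\mathrm{Eis}}(\Gamma(P))$, where the Eisenstein part is the image of restriction to the cusps of $Y(P)$.

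The key step is the kernel of $d_1\colon E_1^{0,1}\to E_1^{1,1}=\bigoplus_B H^1(e(B))$. We would show two things. First, the cuspidal parts of $H^1(\Gamma(P))$ map to zero, since the restriction factors through the boundary of $Y(P)$, where cusp forms vanish. Second, the Eisenstein parts inject, compatibly with how each Borel face lies on the boundary of two maximal faces (line-type and plane-type). Each Borel face contributes $H^1(\text{Heisenberg})\cong\mathbb{C}^2$, and each adjacent maximal face picks out a different line in it. Checking that the Eisenstein classes of the two adjacent vertices map onto these independent directions, so that no combination cancels, is the main obstacle; it would be done by an explicit computation with the Heisenberg group $H^1(U_B)=\mathrm{Hom}(U_B^{\mathrm{ab}},\mathbb{C})$ and the two projections. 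Granting this, $E_2^{0,1}=\bigoplus_P H^1_{\mathrm{cusp}}(\Gamma(P))$. Finally, $E_2=E_\infty$ in total degree $1$, since the complex has length two, so there are no further differentials affecting $E^{1,0}$. Summing the two pieces gives the stated isomorphism.
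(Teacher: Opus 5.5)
Your argument is correct. The paper gives no proof of its own here; it simply quotes AGG84, whose argument rests on the Lee--Schwermer analysis of the Borel--Serre boundary, and your proof is essentially that standard argument: the Mayer--Vietoris spectral sequence for the covering of $\partial X_\Gamma^{\mathrm{BS}}$ by the closed maximal faces (whose nerve is $\mathcal{V}=T_3/\Gamma$), the identification $H^1(\bar e(P)/\Gamma_P)\cong H^1(\Gamma(P),\mathbb{C})$, and Poincar\'e duality $H_3(\partial)\cong H^1(\partial)$.

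The step you flag as the main obstacle is routine. Take an edge $B$ joining a line-type face $P$ and a plane-type face $Q$. The stratum $e(B)/\Gamma_B$ maps to the corresponding cusp circles of $Y(P)$ and $Y(Q)$ via $U_B\to U_B/U_P$ and $U_B\to U_B/U_Q$. So the classes pulled back from $P$ and from $Q$ are multiples of the two different simple-root coordinates $x_{23}$ and $x_{12}$, respectively. These are linearly independent in $\mathrm{Hom}(U_B^{\mathrm{ab}},\mathbb{C})\cong\mathbb{C}^2$. Hence $\ker d_1$ splits vertex by vertex into the kernels of restriction to $\partial\bar Y(P)$, i.e.\ into $\bigoplus_P H^1_{\mathrm{cusp}}(\Gamma(P),\mathbb{C})$, exactly as you claim.
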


Therefore, computing the dimensions of the components in this direct sum will allow us to compute the dimension of the boundary.  We do this for the Iwahori congruence subgroup $\mathcal{I}(3,p)$ in \cref{sec:iwahori-boundary}.  Once we have found a previously-unseen cuspidal automorphic form at level $\Gamma$, we then wish to compute the action of Hecke operators on the form.  We do so by working on the cohomology side, computing the action on the corresponding cuspidal class. We next review the notion of Hecke operators in the $\mathrm{GL}(3)$ setting, which starts with a discussion about higher rank modular symbols. 

\subsection{Modular Symbols} \label{sec:prelims-mod_symbols}
Let $Q$ be a $3 \times 3$ rational matrix with nonzero columns.  The modular symbol $[Q]$ is an element of $H_1(T_3, \mathbb{Z})$, where $T_3$ is the Tits building for $\mathrm{SL}(3, \mathbb{Q})$.  If $Q \in \mathrm{SL}(3, \mathbb{Z})$, we say the modular symbol $[Q]$ is \textbf{unimodular}.  Modular symbols can be viewed more concretely as a collection of nonzero rational column vectors that enjoy the properties of \cite{AR79}*{Proposition 2.2}.  Note that throughout this article, we will swap column vectors for row vectors and left action for right action. 

These properties allow us to write any modular symbol as the finite sum of unimodular symbols.  Algorithms for the reduction process can be found in \cite{AR79}*{Section 4}, \cite{vGvdKTV97}*{Section 2.10}, and \cite{Gun00}.  Throughout this work, we use an implementation of the algorithm from \cite{vGvdKTV97}*{Section 2.10}.  Moreover, Ash and Rudolph \cite{AR79}*{Definition 3.1} show how we can view $[Q]$ as an element in $H_2(X_\Gamma^{\text{BS}}, \partial X_\Gamma^{\text{BS}} ; \mathbb{C})\simeq H^3(\Gamma, \mathbb{C})$, which leads to the following proposition. 

\begin{proposition}[\cite{AGG84}*{Prop.\ 3.24}] \label{thm:2.7}
    The intersection pairing
    \[\langle -, - \rangle \colon H_2(X_\Gamma^{\text{BS}}, \partial X_\Gamma^{\text{BS}}) \times H_3(X_\Gamma^{\text{BS}}) \to \mathbb{C}\]
    is given by
    \[\langle [Q], \Phi(f) \rangle = f(Q)\]
    for any $Q$ in $\mathrm{SL}(3, \mathbb{Z})$ and $f \in W(\Gamma)$.
\end{proposition}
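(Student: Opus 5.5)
The plan is to unwind the construction of $\Phi$ from \cref{thm:AGG-3.2} and then compute the pairing cell by cell. First I would recall how $\Phi$ is built in \cite{AGG84}*{Section 3}. The well-rounded retract of $X$ is a contractible $\mathrm{SL}(3,\mathbb{Z})$-equivariant cell complex $W$ of dimension $N=3$. Its top-dimensional cells are in bijection with cosets of the finite stabilizer of a base cell $C_0$, and $\sigma_1,\sigma_2,h$ generate that stabilizer and the relations among the faces. Thus a $3$-chain on $W/\Gamma$ is a function on $\mathrm{SL}(3,\mathbb{Z})$ that is right $\Gamma$-invariant. Conditions (2) and (3) of \cref{def:AGG-3.1} encode compatibility with orientation under the stabilizer. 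Condition (4) is exactly the cycle condition: the boundary of the chain vanishes across each $2$-face, which is shared by the three cells $C_0m, C_0hm, C_0h^2m$. Hence $\Phi(f)=\sum_{m\in \mathrm{SL}(3,\mathbb{Z})/\Gamma} f(m)\,[C_0 m]$, suitably normalized by stabilizer orders. Since $W/\Gamma$ is a deformation retract of $X_\Gamma^{\text{BS}}$, this represents a class in $H_3(X_\Gamma^{\text{BS}})$.

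Next I would realize the unimodular symbol $[Q]$ geometrically. Following \cite{AR79}*{Definition 3.1}, $[Q]$ is the image in $X_\Gamma^{\text{BS}}$ of the $2$-dimensional flat simplex with vertices at the three cusps (boundary faces) determined by the columns of $Q$. It is a relative $2$-cycle for the pair $(X_\Gamma^{\text{BS}},\partial X_\Gamma^{\text{BS}})$, and it is $\Gamma$-translated by right multiplication. Equivariance lets me reduce to $Q=I$ after replacing $f$ by $f(\,\cdot\,Q)$. So it suffices to prove $\langle [I],\Phi(f)\rangle = f(I)$.

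The core step, and the main obstacle, is showing that the standard symbol $[I]$ meets the retract $W$ transversally in exactly one point. This point should be the barycenter of the base cell $C_0$, or an interior point of $C_0$ chosen so that the intersection number is $+1$ with the orientations fixed in \cite{AGG84}. Concretely, the flat of diagonal quadratic forms (the image of $[I]$) consists of forms for which the standard basis vectors are the minimal vectors only at the "balanced" point. I would verify directly that this flat avoids all other $3$-cells $C_0m$ with $m\notin\mathrm{Stab}(C_0)$. I would also check that, within $\mathrm{Stab}(C_0)$, the sign conditions (2) and (3) make the contributions of the stabilizer elements add coherently rather than cancel. This is a finite computation with explicit quadratic forms, and I would try to carry it out by computing the minimal vectors along the diagonal flat.

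Granting that transversality computation, the count gives $\langle [I],\Phi(f)\rangle = f(I)$, and translating back gives $\langle [Q],\Phi(f)\rangle=f(Q)$.
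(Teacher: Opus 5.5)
The paper gives no proof of this statement. It is quoted from \cite{AGG84}*{Prop.\ 3.24} and used only as a black box behind \cref{eq:mod-symbols} and the Hecke computations in \cref{sec:iwahori-hecke_comp}. Your outline is the standard geometric argument, and it is sound: $\Phi(f)=\sum f(m)[C_0m]$ as a top-dimensional cellular cycle on the well-rounded retract, $[Q]$ as the closure of a maximal flat, and a single transverse intersection point.

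Compared with the citation, your route shows that the whole computation takes place $\mathrm{SL}(3,\mathbb{Z})$-equivariantly on $X$ before passing to the quotient. So the formula holds for every finite-index $\Gamma$, which is what the paper needs when it applies it to $\mathcal{I}(3,p)$. What the citation supplies, and your sketch must borrow, are the orientation conventions and the normalization of $\Phi$ that make the constant exactly $+1$. The intersection point itself is forced (it is the standard form), so there is nothing to "choose". Your reduction to $Q=I$ via $f(\,\cdot\,Q)$ also silently replaces $\Gamma$ by $Q\Gamma Q^{-1}$. It is simpler to translate the geometry: the flat of $[Q]$ meets the retract only at the centre of $C_0Q$, whose coefficient in $\Phi(f)$ is $f(Q)$.

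The step you flag as the main obstacle is immediate.
\begin{itemize}
\item \emph{Single intersection point.} For $t_i>0$ and $0\neq x\in\mathbb{Z}^3$ one has $\sum_i t_ix_i^2\ge\sum_{x_i\neq0}t_i$. Hence the minimal vectors of a diagonal form are the $\pm e_i$ with $t_i$ minimal. The diagonal flat therefore meets the retract only at $x_1^2+x_2^2+x_3^2$. Its minimal vectors are exactly $\pm e_1,\pm e_2,\pm e_3$, so it lies in the open cell $C_0$, and no other cells need checking.
\item \emph{Transversality.} Modulo scalars, the flat has tangent space the diagonal symmetric matrices. The cell $C_0$ is locally $a_{11}=a_{22}=a_{33}$, so its tangent space is the off-diagonal ones. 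These are complementary in the $5$-dimensional $X$.
\item \emph{Signs.} $\sigma_1$ swaps $t_1,t_2$ on the flat and acts on $C_0$ by $(a_{12},a_{13},a_{23})\mapsto(-a_{12},a_{23},-a_{13})$, reversing both orientations. $\sigma_2$ permutes both cyclically and preserves both orientations. Since $\mathrm{SL}(3,\mathbb{R})$ preserves the orientation of $X$, the local intersection sign is invariant under $\mathrm{Stab}(C_0)=\langle\sigma_1,\sigma_2\rangle$, the order-$24$ rotation group of the cube. This is consistent with conditions (2) and (3). Note that $h$ is not in this stabilizer, contrary to your phrasing.
\item \emph{Torsion.} When $\Gamma$ has torsion (e.g.\ $\sigma_1^2\in\mathcal{I}(3,p)$), the quotient is only an orbifold. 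Your ``normalization by stabilizer orders'' should then be made precise by passing to a torsion-free normal subgroup of finite index and averaging.
\end{itemize}

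One inaccuracy in your recollection of $\Phi$: not every $2$-cell of the retract lies on exactly three $3$-cells. Take the minimal set $\{e_1,\,e_1-e_2,\,e_2-e_3,\,e_3\}$, a $4$-cycle of $A_3$ roots in which every three vectors form a basis of $\mathbb{Z}^3$. It gives a $2$-cell on four $3$-cells, so condition (4) is not literally the cycle condition. The four-term condition there follows from four instances of (4) through the diagonal $e_1-e_3$, together with (2) and (3). This belongs to \cref{thm:AGG-3.2}, which the statement takes as given, so it does not affect your computation of the pairing.
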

\noindent \cref{thm:2.7} plays a central role in computing the action of Hecke operators on the cohomology.  We now recall the notion of Hecke operators in the cohomology setting.

\subsection{Hecke Operators} 
\label{sec:prelims-hecke_operators}
For any $A \in \mathrm{GL}(3, \mathbb{Q})$, there is a Hecke operator
\[T_A \colon H^3(\Gamma, \mathbb{C}) \to H^3(\Gamma, \mathbb{C}).\]
Its adjoint operator $T_A^*$ acts on the dual space $H_3(\Gamma, \mathbb{C})$.  Hence, we can evaluate the action of the Hecke operator through the intersection pairing of \cref{thm:2.7}.  More precisely, from \cite{AGG84}*{Section 4, p.\ 426}, we have the equation
\begin{equation} \label{eq:mod-symbols}
    \langle [Q], T_A^* f \rangle = \sum \langle [Q_{i,j}], f \rangle,
\end{equation}
where the $[Q_{i,j}]$ are unimodular symbols such that $[QB_i] = \sum [Q_{i,j}]$ for a fixed $B_i$ in the finite collection of single coset representatives that stem from the decomposition of the double coset $\Gamma A \Gamma$:
\[\Gamma A \Gamma = \coprod_{i=1}^k B_i \Gamma, \quad B_i \in \mathrm{GL}(3, \mathbb{Q}).\]

We will focus our attention on the special Hecke operators $E_\ell = T_{A(\ell)}$ and $F_\ell = T_{B(\ell)}$, with $\ell \neq p$ prime and
\[
A(\ell) =
\begin{pmatrix}
    \ell & 0 & 0 \\
    0 & 1 & 0 \\
    0 & 0 & 1
\end{pmatrix}, \quad
B(\ell) =
\begin{pmatrix}
    \ell & 0 & 0 \\
    0 & \ell & 0 \\
    0 & 0 & 1
\end{pmatrix},
\]
as these specific operators generate the algebra of all Hecke operators $T_A$ acting on $H^3(\Gamma, \mathbb{C})$ by \cite{AGG84}*{Proposition 4.1}.  This algebra is called the \textbf{Hecke algebra} acting on $H^3(\Gamma, \mathbb{C})$.  For a fixed prime $\ell \neq p$, the eigenvalues associated with the special Hecke operators $E_\ell$ and $F_\ell$ are denoted by $e_\ell$ and $f_\ell$ respectively.  

Let $u_f$ be an eigenclass for all Hecke operators $E_\ell$ with complex eigenvalues $e_\ell$ for prime $\ell \neq p$.  In the $\GL(3)$ setting,
\[
    L_\ell(f, s) = (1 - e_\ell \ell^{-s} + \overline{e_\ell} \ell^{1-2s} - \ell^{3 - 3s})^{-1}
\]
is the local factor of the associated $L$-function for $\ell \neq p$, when $\Gamma_0(3, p)$ or $\mathcal{I}(3,p)$ is the level of the form.  We note that $e_\ell$ and $f_\ell$ are conjugate, and so the literature also presents the local factor as 
\[
    L_\ell(f, s) = (1 - e_\ell \ell^{-s} + f_\ell \ell^{1-2s} - \ell^{3 - 3s})^{-1}.
\]
From a computational perspective, it is more efficient to just calculate the action of a single operator $E_\ell$ (resp.\ $F_\ell$) on the space of interest, hence the use of the single eigenvalue $e_\ell$ (resp.\ $f_\ell$).

Alternatively, one can view this data through the lens of Galois representations. Let $G_\mathbb{Q}$ denote the absolute Galois group $\textrm{Gal}(\overline{\mathbb{Q}}/\mathbb{Q})$.  Each Hecke eigenform has a 3-dimensional $G_\mathbb{Q}$ representation attached such that $e_\ell$ is the trace of the Frobenius at $\ell$ and $p$ is the conductor of said representation.  

More generally, let $V$ be a representation of the Hecke algebra.  Suppose $v \in V$ is a simultaneous eigenvector for all Hecke operators $T(\ell, k) = T_{D(\ell, k)}$ with $\ell \neq p$, where $D(\ell, k)$ is the $n \times n$ matrix
\[D(\ell, k) = \begin{pmatrix}
1 & & & & & \\
& \ddots & & & & \\
& & 1 & & & \\
& & & \ell & & \\
& & & & \ddots & \\
& & & & & \ell 
\end{pmatrix},\]
with the first $n-k$ diagonal entries equal to $1$ and the last $k$ diagonal entries equal to $\ell$.  Let $a(\ell, k) \in \mathbb{C}$ denote the eigenvalues such that $T(\ell, k) v = a(\ell, k) v$ for all $\ell \neq p$ prime and $0 \leq k \leq n$.  We say that the continuous Galois representation 
\[\rho \colon G_\mathbb{Q} \to \mathrm{GL}(n, \mathbb{Q}_p)\]
is \textbf{attached} to the eigenform $v$ or that $v$ \textbf{corresponds} to $\rho$ when
\begin{equation} \label{eq:hecke-poly}
\sum_{k=0}^n (-1)^k \ell^{k(k-1)/2} a(\ell, k)X^k = \det (I_{n \times n} - \rho(\Frob_\ell) X),
\end{equation}
for all $\ell \neq p$.  

We will leverage the correspondence to understand data on the left-hand side of the equation, which we call the \textbf{Hecke polynomial}, by considering the representations on the right-hand side of the equation.  The eigenvalues of the image $\rho(\Frob_\ell)$ for a fixed prime $\ell$, the Frobenius element at $\ell$ under the Galois representation $\rho$, are called \textbf{$\ell$-Satake parameters}.  More precisely, an $\ell$-Satake parameter is a semi-simple conjugacy class in $\mathrm{GL}(n, \mathbb{C})$ that corresponds to an unramified irreducible representation $\pi_\ell$, a local component of an automorphic representation $\pi$ (see \cite{Gro96}*{Section 6}).

\section{Main Results} 
\label{sec:iwahori-results}
\subsection{Simplicial Complex}
Recall, our goal is to find cuspidal classes arising in $H^3_\text{cusp}(\mathcal{I}(3,p), \mathbb{C})$.  Previous work has been done for level $\Gamma_0(3, p)$, but in order to move from level $\Gamma_0(3, p)$ to level $\mathcal{I}(3, p)$, we must adapt key results from \cite{AGG84}.  Henceforth, we will use $\mathcal{I}$ to denote $\mathcal{I}(3,p)$.  The primary results to adapt are \cite{AGG84}*{Propositions 3.10, 3.11}, which allow us to compute the dimension of the boundary component of the homology $H_3(\partial X_{\mathcal{I}}^\text{BS}, \mathbb{C})$ using \cref{prop:AGG-2.3}.  

A central component in \cref{prop:AGG-2.3} is the simplicial complex $\mathcal{V} = T_3/\mathcal{I}$, where $T_3$ denotes the Tits building for the group $\mathrm{SL}(3, \mathbb{Q})$.  In order to construct $\mathcal{V} = T_3/\mathcal{I}$, we start by finding a set of representatives of $\mathcal{I}(3, p)$-orbits of maximal and minimal proper parabolic $\mathbb{Q}$-subgroups of $\mathrm{SL}(3)$, which requires the following definition and lemma.

\begin{definition}
    An ordered collection of vectors $\mathcal{B} = \{b_1, \ldots, b_m\}$ is called a \textbf{saturated list} in $\mathbb{Z}^n$ if $\mathcal{B}$ can be extended to a basis of $\mathbb{Z}^n$.
\end{definition}

\begin{lemma} \label{lem:flags}
    Let $\mathbb{F}_p = \mathbb{Z}/p\mathbb{Z}$.  Further, let $\overline{B}$ denote the Borel subgroup of upper triangular matrices in $\overline{G} = \mathrm{SL}(n, \mathbb{F}_p)$.  Let $V = \{v_1, \ldots, v_m\}$ and $W = \{w_1, \ldots, w_m\}$ be two saturated lists in $\mathbb{Z}^n$.  Let $V_i$ and $W_i$ denote the subspaces of $\mathbb{Q}^n$ generated by $\{v_1, \ldots, v_i\}$ and  $\{w_1, \ldots, w_i \}$ respectively. 
    Fix $i_1 < i_2 < \cdots < i_k$ from $\{1, \ldots, m\}$.  Consider the flags $F^V$ and $F^W$, which correspond to a nested sequence of subspaces
    \begin{gather*}
        0 \subsetneq V_{i_1} \subsetneq V_{i_2} \subsetneq \cdots \subsetneq V_{i_k} \subseteq \mathbb{Q}^n, \\
        0 \subsetneq W_{i_1} \subsetneq W_{i_2} \subsetneq \cdots \subsetneq W_{i_k} \subseteq \mathbb{Q}^n.
    \end{gather*}
    Let $\overline{F}^V$ and $\overline{F}^W$ denote the flags 
    \begin{gather*}
        0 \subsetneq \overline{V}_{i_1} \subsetneq \overline{V}_{i_2} \subsetneq \cdots \subsetneq \overline{V}_{i_k} \subseteq \mathbb{F}_p^n, \\
        0 \subsetneq \overline{W}_{i_1} \subsetneq \overline{W}_{i_2} \subsetneq \cdots \subsetneq  \overline{W}_{i_k} \subseteq \mathbb{F}_p^n,
    \end{gather*}
    where $\overline{V}_i = \langle \bar{v}_1, \ldots, \bar{v}_i \rangle$ and $\overline{W}_i = \langle \bar{w}_1, \ldots, \bar{w}_i \rangle$ with 
    $v_j \equiv \bar{v}_j \pmod p$ and $w_j \equiv \bar{w}_j \pmod p$.  Suppose there exists $B_0 \in \overline{B}$ that sends $\overline{F}^V$ to $\overline{F}^W$.  Then, there exists some $\mathcal{I}_0 \in \mathcal{I}(n, p)$ such that $\mathcal{I}_0 \equiv B_0 \pmod p$ and $\mathcal{I}_0$ sends $F^V$ to $F^W$.
\end{lemma}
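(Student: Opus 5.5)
Extend both saturated lists to bases of $\mathbb{Z}^n$ and let $M_V, M_W \in \mathrm{GL}(n,\mathbb{Z})$ be the matrices with these bases as columns. Rescaling the last basis vector by $-1$ when $m<n$ puts both in $\mathrm{SL}(n,\mathbb{Z})$; when $m=n$, the flags only involve $V_{i_j}$ with $i_j\le n$, so flipping the sign of $v_n$ or $w_n$ does not change them. The flag $F^V$ is the image under $M_V$ of the standard partial flag $E$ of type $(i_1,\dots,i_k)$, and $F^W = M_W E$. The stabilizer of $E$ in $\mathrm{SL}(n,\mathbb{Z})$ is the block upper triangular group $P(\mathbb{Z})$ of type $(i_1,\dots,i_k)$. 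Thus $g\in \mathrm{SL}(n,\mathbb{Z})$ sends $F^V$ to $F^W$ exactly when $g \in M_W P(\mathbb{Z}) M_V^{-1}$. The same statement holds mod $p$ with $\overline{P}$, the reduction, and the hypothesis says $B_0 \in \overline{M}_W\, \overline{P}\, \overline{M}_V^{-1}$.

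Write $B_0 = \overline{M}_W \bar q\, \overline{M}_V^{-1}$ with $\bar q\in \overline{P}\cap \mathrm{SL}(n,\mathbb{F}_p)$. The key step is to lift $\bar q$ to some $q\in P(\mathbb{Z})\cap\mathrm{SL}(n,\mathbb{Z})$ with $q\equiv\bar q \pmod p$. The reduction map $P(\mathbb{Z}) \to P(\mathbb{F}_p)$ on determinant-one elements is surjective, for the following reason. Such an element is a product of a block-diagonal part and a unipotent block upper triangular part. Lift the unipotent part entrywise. For the Levi part, the diagonal blocks $\bar q_j \in \mathrm{GL}(d_j,\mathbb{F}_p)$ have product of determinants $1$. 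Modify them by diagonal matrices so that each block except the last has determinant $1$; for example, move determinants into the last block via $\mathrm{diag}(\det \bar q_j^{-1},1,\dots)$ and compensate in the last block. This makes each block lie in $\mathrm{SL}(d_j,\mathbb{F}_p)$. Then apply strong approximation, i.e.\ surjectivity of $\mathrm{SL}(d,\mathbb{Z})\to\mathrm{SL}(d,\mathbb{F}_p)$, which holds since $\mathrm{SL}(d,\mathbb{F}_p)$ is generated by elementary matrices and these lift. The diagonal corrections can be lifted as well: a torus element $\mathrm{diag}(t,t^{-1})$ spread across two blocks is a product of elementary matrices in $\mathrm{SL}(n,\mathbb{F}_p)$ supported in the $2\times 2$ positions of those coordinates, and since one coordinate lies in an earlier block, these matrices can be chosen inside $\overline{P}$. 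It is cleaner to argue directly that $\overline{P}\cap\mathrm{SL}$ is generated by the elementary matrices $E_{ab}(t)$ it contains together with such torus elements, each of which lifts into $P(\mathbb{Z})$. This lifting is the main obstacle; the rest is formal.

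Set $\mathcal{I}_0 = M_W q M_V^{-1}\in\mathrm{SL}(n,\mathbb{Z})$. It stabilizes nothing in particular but sends $F^V=M_VE$ to $M_W qE = M_WE = F^W$, and $\mathcal{I}_0 \equiv \overline{M}_W\bar q\,\overline{M}_V^{-1}=B_0 \pmod p$. Since $B_0$ is upper triangular mod $p$, $\mathcal{I}_0$ lies in $\mathcal{I}(n,p)$, which finishes the proof. The only subtleties are the determinant normalization of $M_V, M_W$ and the fact that $\bar q$ is well defined. The latter holds because any element sending $\overline{F}^V$ to $\overline{F}^W$ has this form, as $\overline{M}_V$ and $\overline{M}_W$ carry the standard flag to $\overline{F}^V$ and $\overline{F}^W$ by construction.
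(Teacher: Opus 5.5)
Your argument is the paper's argument written in coordinates: $M_WM_V^{-1}$ is the paper's $A$, and $M_VqM_V^{-1}$ is its $C\in\Stab(F^V)$. Both proofs rest on surjectivity of the reduction map $r\colon P(\mathbb{Z})\to\overline{P}$ from the integral stabilizer. The paper asserts this without proof; you try to prove it. \textbf{That step is false}, so there is a genuine gap.

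Your justification breaks at the torus correction. Take $\mathrm{diag}(t,t^{-1})$ supported on coordinates $a$ and $b$, with $b$ in a later block than $a$. The elementary matrices $E_{ba}(\cdot)$ sit below the block diagonal, so they are not in $\overline{P}$. In fact the subgroup of $\overline{P}$ generated by its elementary matrices is $\overline{U}\cdot\prod_j\mathrm{SL}(d_j,\mathbb{F}_p)$, where $\overline{U}$ is the unipotent radical, and every diagonal block of this subgroup has determinant $1$. The obstruction is intrinsic, not a defect of your method. An element of $P(\mathbb{Z})$ has integral diagonal blocks whose determinants multiply to $1$, so each is $\pm1$, and reduction mod $p$ preserves block determinants. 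Hence for $p\ge5$ and a flag with at least two blocks, $\mathrm{diag}(t,t^{-1})$ with $t\not\equiv\pm1$ has no preimage. (For $p\in\{2,3\}$, where $\mathbb{F}_p^\times=\{\pm1\}$, the lift does exist.)

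Consequently the lemma as stated is false for $p\ge5$, and no repair of the lifting step can prove it. Here is a counterexample:
\begin{itemize}
\item Take $n=3$, $V=W=\{e_1\}$, $i_1=1$, and $B_0=\mathrm{diag}(a,a^{-1},1)\in\overline{B}$ with $a\not\equiv\pm1\pmod p$. Then $B_0$ fixes $\langle\bar e_1\rangle$.
\item Any $g\in\mathrm{SL}(3,\mathbb{Z})$ with $g\,\mathbb{Q}e_1=\mathbb{Q}e_1$ satisfies $ge_1=\pm e_1$, because both $g$ and $g^{-1}$ are integral.
\item So the $(1,1)$ entry of $g$ is $\pm1\not\equiv a$, and no $\mathcal{I}_0\equiv B_0$ can exist.
\end{itemize}

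What survives, and is all that \cref{lem:A} uses, is the version without the congruence $\mathcal{I}_0\equiv B_0$. Since $r(P(\mathbb{Z}))\supseteq\overline{U}\cdot\prod_j\mathrm{SL}(d_j,\mathbb{F}_p)$, one has $\overline{P}=\overline{T}\,r(P(\mathbb{Z}))$, with $\overline{T}$ the diagonal torus. By the Bruhat decomposition, each $g\in\mathrm{SL}(n,\mathbb{F}_p)$ lies in some $\overline{B}w\overline{P}$ with $w$ monomial. Since $w\overline{T}w^{-1}=\overline{T}\subseteq\overline{B}$, this gives $\overline{B}g\overline{P}=\overline{B}w\overline{T}\,r(P(\mathbb{Z}))=\overline{B}w\,r(P(\mathbb{Z}))=\overline{B}g\,r(P(\mathbb{Z}))$. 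Apply this with $g=\overline{M}_V$ to $\overline{M}_W\in B_0\overline{M}_V\overline{P}$. You get $\overline{M}_W=\bar b\,\overline{M}_V\,r(q)$ with $\bar b\in\overline{B}$ and $q\in P(\mathbb{Z})$. Then $M_Wq^{-1}M_V^{-1}$ lies in $\mathcal{I}(n,p)$ and carries $F^V$ to $F^W$, but it reduces to $\bar b$ rather than to the prescribed $B_0$.
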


\begin{proof}
    Since $V$ and $W$ are saturated lists in $\mathbb{Z}^n$, there exists $A \in \mathrm{SL}(n, \mathbb{Z})$ that sends $V_i$ to $W_i$ simultaneously for all $i$.  Therefore, $A$ sends $F^V$ to $F^W$ and $\bar{A} \equiv A \pmod p$ sends $\overline{F}^V$ to $\overline{F}^W$.  Since $B_0$ also sends $\overline{F}^V$ to $\overline{F}^W$, the product $\bar{A}^{-1}B_0$ is in the stabilizer of $\overline{F}^V$, 
    \[\Stab(\overline{F}^V) = \{g \in \overline{G} \colon g \cdot \overline{V}_{i_j} = \overline{V}_{i_j} \text{ for all } j \in \{1, \ldots, k\} \}.\]
    We note that the map 
    \[
    \begin{tikzcd} 
    r \colon \Stab(F^V) \arrow[r, twoheadrightarrow] & \Stab(\overline{F}^V)
    \end{tikzcd}
    \]
    is surjective. Thus, there exists some $C$ in the stabilizer of $F^V$ such that $r(C) = \bar{A}^{-1}B_0$.  Let $\mathcal{I}_0 = AC$.  Then, 
    \[\mathcal{I}_0 = AC \equiv \bar{A} \bar{A}^{-1} B_0 = B_0 \pmod p,\]
    so $\mathcal{I}_0 \in \mathcal{I}(n, p)$ and $\mathcal{I}_0$ sends $F^V$ to $F^W$, as desired.
\end{proof}

\begin{proposition}
\label{lem:A}
    Let $p$ be a rational prime and let $e_1, e_2, e_3$ be the standard basis of column vectors in $\mathbb{Q}^3$.  A set of representatives of $\mathcal{I}(3, p)$-orbits of maximal proper parabolic $\mathbb{Q}$-subgroups of $\mathrm{SL}(3)$ is given by
    \begin{align*}
        P_1 = \mathrm{Stab}\langle e_1 \rangle, && P_1^T  = \mathrm{Stab}\langle e_2, e_3 \rangle, \\
        P_2 = \mathrm{Stab}\langle e_2 \rangle, && P_2^T = \mathrm{Stab}\langle e_1, e_3 \rangle, \\
        P_3 = \mathrm{Stab}\langle e_3 \rangle, && P_3^T = \mathrm{Stab}\langle e_1, e_2 \rangle. 
    \end{align*}
    Moreover, a set of representatives of $\mathcal{I}(3, p)$-orbits of minimal parabolic $\mathbb{Q}$-subgroups of $\mathrm{SL}(3)$ is given by
    \begin{align*}
        P_1 \cap P_2^T, && P_1 \cap P_3^T, \\ 
        P_2 \cap P_3^T, && P_2 \cap P_1^T, \\
        P_3 \cap P_1^T, && P_3 \cap P_2^T.
    \end{align*}
\end{proposition}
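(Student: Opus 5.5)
Maximal proper parabolic $\mathbb{Q}$-subgroups of $\mathrm{SL}(3)$ are stabilizers of lines or planes in $\mathbb{Q}^3$, and minimal ones are stabilizers of full flags (line $\subset$ plane). So I will classify $\mathcal{I}(3,p)$-orbits on lines, planes, and flags. Every line $L \subset \mathbb{Q}^3$ contains a primitive integer vector $v$, unique up to sign, so $\{v\}$ is a saturated list. Likewise every plane has a $\mathbb{Z}$-basis $\{v_1,v_2\}$ of $P\cap\mathbb{Z}^3$, which is saturated because $P \cap \mathbb{Z}^3$ is a direct summand. Every flag $L \subset P$ is given by a saturated list $\{v_1, v_2\}$ with $L = \langle v_1\rangle$. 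By \cref{lem:flags} applied with $m=1$, $m=2$ (with $k=1$, $i_1=2$), and $m=2$ (with $k=2$), two such objects lie in the same $\mathcal{I}(3,p)$-orbit if and only if their reductions modulo $p$ lie in the same $\overline{B}$-orbit. The converse direction is immediate: if $\mathcal{I}_0 \in \mathcal{I}(3,p)$ sends $F^V$ to $F^W$, then its reduction lies in $\overline{B}$ and sends $\overline{F}^V$ to $\overline{F}^W$, since reduction of a saturated lattice spans the reduced subspace. Reduction is well defined on orbits for the same reason.

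The problem therefore reduces to the Bruhat decomposition over $\mathbb{F}_p$. The $\overline{B}$-orbits on $\mathbb{P}^2(\mathbb{F}_p)$, on lines in $\mathbb{F}_p^3$, are the Schubert cells indexed by $W = S_3$ modulo the appropriate parabolic subgroup. There are three cells for lines, with representatives $\langle e_1\rangle, \langle e_2\rangle, \langle e_3\rangle$, and three for planes, with representatives $\langle e_2,e_3\rangle,\langle e_1,e_3\rangle,\langle e_1,e_2\rangle$. For full flags there are six, indexed by $S_3$, with representatives the coordinate flags $\langle e_i\rangle \subset \langle e_i,e_j\rangle$ for $i\neq j$. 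I will verify the line case directly: a line spanned by $\bar v$ belongs to the orbit of $e_k$, where $k$ is the largest index with $\bar v_k \neq 0$, because upper triangular matrices can clear the entries above it and scale it. For planes I will argue by duality, or via the pivot/row-echelon form, where the orbit is determined by the set of pivot positions. For flags, the permutation $w$ with $\dim(\overline{F}_a \cap \langle e_1,\dots,e_b\rangle)$ recording the relative position is the invariant, and echelon form provides the coordinate representative.

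Finally, I will lift these representatives trivially: the coordinate vectors are already integral saturated lists reducing to the $\mathbb{F}_p$ representatives. This gives exactly the listed $P_i$, $P_i^T$, and the six intersections $P_i\cap P_j^T$ with $i\notin\{j\}$, after matching labels so that $P_i \cap P_j^T$ is the stabilizer of $\langle e_i\rangle \subset \langle e_i, e_k\rangle$, where $\{i,j,k\}=\{1,2,3\}$. I also need to check that these are pairwise inequivalent, which follows from distinctness of the Bruhat cells mod $p$ together with the converse direction above.

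The step I expect to need the most care is verifying the hypotheses of \cref{lem:flags} for arbitrary rational subspaces. In particular, I must confirm that a $\overline{B}$-element moving one reduced flag to another yields one moving the integral flags, including sign issues: $\mathcal{I}(3,p)\subset\mathrm{SL}$ versus $\overline{B}\subset\mathrm{SL}(3,\mathbb{F}_p)$. Since $\overline{B}$-orbits of flags are the same under the full upper triangular group in $\mathrm{GL}$ (torus scalings can be adjusted by determinant on a coordinate not affecting the orbit), I expect this to be fine.
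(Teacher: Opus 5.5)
Your proposal follows the paper's proof: reduce via \cref{lem:flags} to $\overline{B}$-orbits of lines, planes and complete flags in $\mathbb{F}_p^3$, read off three, three and six orbits with coordinate representatives from the Bruhat decomposition, and lift those representatives integrally. Your converse direction (elements of $\mathcal{I}(3,p)$ reduce into $\overline{B}$) makes explicit the pairwise inequivalence that the paper leaves implicit. On the step you flagged, the real issue is the torus rather than signs: for $p \geq 5$ the reduction $\Stab(F^V) \to \Stab(\overline{F}^V)$ is not surjective, since integral stabilizers act on the graded pieces with determinant $\pm 1$, so $B_0$ itself need not lift as \cref{lem:flags} literally asserts. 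The orbit-level statement you actually use still holds, however, after replacing $B_0$ by $B_0 t$ with $t$ in a maximal torus of $\Stab(\overline{F}^V)$ contained in $\overline{B}$. Such a torus exists because $\overline{F}^V$ lies in the $\overline{B}$-orbit of a coordinate flag, and it meets every coset of the image of the reduction map.
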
 

\begin{proof}
We start by noting that for a flag $F$ in $\mathbb{Q}^n$, there exists a saturated list in $\mathbb{Z}^n$ whose associated flag coincides with $F$.  Given that a parabolic subgroup corresponds to the stabilizer of a flag, \cref{lem:flags} implies that to find a set of representatives of $\mathcal{I}(3, p)$-orbits of proper parabolic $\mathbb{Q}$-subgroups of $\mathrm{SL}(3)$, it suffices to find a set of representatives of the Borel orbits of proper parabolic subgroups of $\mathrm{SL}(3, \mathbb{F}_p)$.  

Recall that a maximal proper parabolic subgroup corresponds to the stabilizer of a partial flag.  Thus, a maximal proper parabolic subgroup of $\mathrm{SL}(3, \mathbb{F}_p)$ is exactly the stabilizer of a single, nonzero proper subspace of $\mathbb{F}_p^3$.  There are two types of these subspaces: lines and planes.  In order to understand the orbits of the stabilizers of these subspaces, we invoke the Bruhat decomposition of $\overline{G} = \mathrm{SL}(3, \mathbb{F}_p)$.  

The Bruhat decomposition of $\overline{G}$
is given by 
\begin{equation} \label{eq:bruhat}
    \overline{G} = \overline{B}W\overline{B} = \coprod_{w \in W} \overline{B}w\overline{B},
\end{equation}
where $\overline{B}$ is the Borel subgroup of upper triangular matrices in $\overline{G}$ and $W$ is the Weyl group.  In this case, $W$ is isomorphic to the symmetric group on the set of three elements $S_3$.  

We observe that there is a single $\overline{G}$-orbit of lines, given by $\overline{G} \cdot \langle \bar{e}_1 \rangle$, where $\bar{e}_1 = (1, 0, 0)^T \pmod p$.  \cref{eq:bruhat} allows us to decompose the $\overline{G}$-orbit of lines into the the disjoint union of $\overline{B}$-orbits.  Note that $\langle \bar{e}_1 \rangle$ is fixed by $\overline{B}$.  Therefore, computing $\overline{G} \cdot \langle \bar{e}_1 \rangle$, we find
\begin{align*}
    \overline{G} \cdot \langle \bar{e}_1 \rangle & = \coprod_{w \in W} \overline{B}w\overline{B} \cdot \langle \bar{e}_1 \rangle \\
    & = \coprod_{w \in W} \overline{B}w \cdot \langle \bar{e}_1 \rangle \\
     & =\overline{B} \cdot \langle \bar{e}_1 \rangle \sqcup \overline{B} \cdot \langle \bar{e}_2 \rangle \sqcup \overline{B} \cdot \langle \bar{e}_3 \rangle,
\end{align*}
where
\begin{align*}
\bar{e}_2 & = (0, 1, 0)^T \pmod p, \\
\bar{e}_3 & = (0, 0, 1)^T \pmod p.
\end{align*}
Thus, we have three $\overline{B}$-orbits of lines:
\begin{align*}
    \overline{P}_1 = \Stab\langle \bar{e}_1 \rangle, \\
    \overline{P}_2 = \Stab\langle \bar{e}_2\rangle, \\
    \overline{P}_3 = \Stab\langle \bar{e}_3 \rangle.
\end{align*}

To understand the $\overline{B}$-orbits of planes, we follow a similar strategy. We can again use \cref{eq:bruhat} to decompose the $\overline{G}$-orbit of planes into the the disjoint union of $\overline{B}$-orbits.  There is a single $\overline{G}$-orbit of planes, given by $\overline{G} \cdot \langle \bar{e}_1, \bar{e}_2 \rangle$.  We note that $\langle \bar{e}_1, \bar{e}_2 \rangle$ is fixed by $\overline{B}$.  So, when we compute $\overline{G} \cdot \langle \bar{e}_1, \bar{e}_2 \rangle$, we find 
\begin{align*}
    \overline{G} \cdot \langle \bar{e}_1, \bar{e}_2 \rangle & = \coprod_{w \in W} \overline{B}w\overline{B} \cdot \langle \bar{e}_1, \bar{e}_2 \rangle \\
    & = \coprod_{w \in W} \overline{B}w \cdot \langle \bar{e}_1, \bar{e}_2 \rangle \\
     & =\overline{B} \cdot \langle \bar{e}_1, \bar{e}_2 \rangle \sqcup \overline{B} \cdot \langle \bar{e}_2, \bar{e}_3 \rangle \sqcup \overline{B} \cdot \langle \bar{e}_1, \bar{e}_3 \rangle,
\end{align*}
also giving three $\overline{B}$-orbits:
\begin{align*}
    \overline{P}_1^T = \Stab\langle \bar{e}_2, \bar{e}_3 \rangle, \\
    \overline{P}_2^T = \Stab\langle \bar{e}_1, \bar{e}_3 \rangle, \\
    \overline{P}_3^T = \Stab\langle \bar{e}_1, \bar{e}_2 \rangle.
\end{align*}
Thus, a set of representatives of $\overline{B}$-orbits of maximal proper parabolic subgroups of $\mathrm{SL}(3, \mathbb{F}_p)$ is given by $\{\overline{P}_1, \overline{P}_2, \overline{P}_3, \overline{P}_1^T, \overline{P}_2^T, \overline{P}_3^T\}$.

To determine the minimal parabolic subgroup orbits, we recall that a minimal parabolic subgroup of $\mathrm{SL}(3, \mathbb{F}_p)$ corresponds to the stabilizer of a complete flag.  A complete flag must include a line and a plane such that the line is contained in the plane, e.g. the standard complete flag $F^\text{std}$ is
\[0 \subsetneq \langle \bar{e}_1 \rangle \subsetneq \langle \bar{e}_1, \bar{e}_2 \rangle \subsetneq \mathbb{F}_p^3.\]
Then, the corresponding minimal parabolic subgroup is given by 
\begin{align*}
\Stab(F^\text{std}) & = \{g \in \overline{G} \colon g \cdot \langle \bar{e}_1 \rangle  = \langle \bar{e}_1 \rangle \text{ and } g \cdot \langle \bar{e}_1, \bar{e}_2 \rangle  = \langle \bar{e}_1, \bar{e}_2 \rangle\} \\
& = \Stab \langle \bar{e}_1 \rangle \cap \Stab \langle \bar{e}_1, \bar{e}_2 \rangle \\
& = \overline{P}_1 \cap \overline{P}_3^T.
\end{align*}
We observe that there is a single $\overline{G}$-orbit of complete flags, given by $\overline{G} \cdot F^\text{std}$.  We also see that $\overline{B}$ fixes $F^\text{std}$.  Therefore, 
\begin{align*}
    \overline{G} \cdot F^\text{std} & = \coprod_{w \in W} \overline{B}w\overline{B} \cdot F^\text{std} \\
    & = \coprod_{w \in W} \overline{B}w \cdot F^\text{std}.
\end{align*}
The Weyl elements permute the standard basis vectors, sending $F^\text{std}$ to the complete flags 
\[0 \subsetneq \langle \bar{e}_k \rangle \subsetneq \langle \bar{e}_i, \bar{e}_j \rangle \subsetneq \mathbb{F}_p^3\]
with $k \in \{i, j\}$.  Hence, a set of representatives of $\overline{B}$-orbits of minimal parabolic subgroups is
\begin{align*}
    \overline{P}_1 \cap \overline{P}_2^T, && \overline{P}_1 \cap \overline{P}_3^T, \\ 
    \overline{P}_2 \cap \overline{P}_3^T, && \overline{P}_2 \cap \overline{P}_1^T, \\
    \overline{P}_3 \cap \overline{P}_1^T, && \overline{P}_3 \cap \overline{P}_2^T.
\end{align*}

The final step is to invoke \cref{lem:flags} to lift the representatives of $\overline{B}$-orbits of proper parabolic subgroups of $\mathrm{SL}(3, \mathbb{F}_p)$ to representatives of $\mathcal{I}(3,p)$-orbits of proper parabolic $\mathbb{Q}$-subgroups, giving the desired result. \qedhere


\end{proof}


From \cref{lem:A}, we can now construct $\mathcal{V} = T_3/\mathcal{I}$.  We note that the vertices of $\mathcal{V}$ are given by the $\mathcal{I}(3, p)$-orbits of maximal proper parabolic subgroups and the edges are drawn based on the $\mathcal{I}(3, p)$-orbits of minimal parabolic subgroups.  This construction results in the simplicial complex shown in \cref{fig:Tits}.  Before stating the main theorem, we recall the following definition from \cite{AGG84}. 

\begin{figure}[h] 
\centering
\begin{tikzpicture}[scale=1, every node/.style={font=\small}]
  \def\R{2cm}        
  \def\colorA{black}   
  \def\colorB{black}  
  \def\vertsize{4pt} 

  \foreach \i in {1,...,6}{
    \coordinate (P\i) at ({90 - (\i-1)*60}:\R);
  }

  \draw[thick] (P1) \foreach \i in {2,...,6} { -- (P\i) } -- cycle;

  \foreach \i in {1,...,6}{
    \ifodd\i
      \def\fillcolor{\colorA}
      \pgfmathtruncatemacro{\n}{(\i+1)/2} 
      \filldraw[fill=\fillcolor, draw=black] (P\i) circle (\vertsize)
        node[above right=2pt] {$P_{\n}$};
    \else
      \def\fillcolor{\colorB}

      \ifnum\i=2
        \def\n{3}
      \fi
      \ifnum\i=4
        \def\n{1}
      \fi
      \ifnum\i=6
        \def\n{2}
      \fi
      \filldraw[fill=\fillcolor, draw=black] (P\i) circle (\vertsize)
        node[below left=2pt] {$P_{\n}^{T}$};
    \fi
  }
\end{tikzpicture}
\caption{The simplicial complex $\mathcal{V} = T_3/\mathcal{I}$.}
\label{fig:Tits}
\end{figure}

\begin{definition}[\cite{AGG84}*{Def.\ 3.10}] \label{def:1}
    Let $\Delta(p)$ be the subgroup of $\GL(2, \mathbb{Z})$ generated by $\Gamma_0(2, p)$ and the matrix
    \[s = \begin{pmatrix}
        -1 & 0 \\
        0 & 1
    \end{pmatrix}.\]
\end{definition}

\begin{theorem} \label{prop:B}
    $H_3(\partial X_{\mathcal{I}}^\text{BS}, \mathbb{C}) \simeq H^1(\mathcal{V}, \mathbb{C}) \oplus [H^1_\mathrm{cusp} \big(\Delta(p), \mathbb{C}\big)]^6$.
\end{theorem}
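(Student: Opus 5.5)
The plan is to specialize \cref{prop:AGG-2.3} to $\Gamma = \mathcal{I} = \mathcal{I}(3,p)$. By \cref{lem:A}, the set $\mathcal{P}$ of representatives of $\mathcal{I}$-orbits of maximal parabolic $\mathbb{Q}$-subgroups is $\{P_1, P_2, P_3, P_1^T, P_2^T, P_3^T\}$. The term $H^1(\mathcal{V}, \mathbb{C})$ appears unchanged. It therefore suffices to show that for each of these six $P$, the projected group $\mathcal{I}(P)$ (the image of $\mathcal{I} \cap P$ in the Levi quotient, viewed as a subgroup of $\GL(2, \mathbb{Z})$) is conjugate in $\GL(2,\mathbb{Z})$ to $\Delta(p)$. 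Conjugate groups have isomorphic cuspidal cohomology, so each summand is then $H^1_\mathrm{cusp}(\Delta(p),\mathbb{C})$, and there are six of them.

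For a single $P$, say $P_1 = \Stab\langle e_1\rangle$, I would proceed as follows.
\begin{enumerate}
\item \emph{Describe $\mathcal{I} \cap P_1$.} It consists of the matrices in $\mathcal{I}$ whose first column is $(\pm 1, 0, 0)^T$.
\item \emph{Describe the Levi projection.} The Levi $S(\GL(1) \times \GL(2))$ projects onto the $\GL(2)$ block acting on the $(e_2, e_3)$ coordinates. Since $a_{11} = \pm 1$, this block has determinant $\pm 1$.
\item \emph{Read off the congruence condition.} The only condition inherited from $\mathcal{I}$ on this block is $a_{32} \equiv 0 \pmod p$; the conditions $a_{21} \equiv a_{31} \equiv 0$ are automatic, because those entries are $0$. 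Hence the image lies in $\{g \in \GL(2,\mathbb{Z}) \colon g_{21} \equiv 0 \pmod p\}$. This group is exactly $\Gamma_0(2,p) \sqcup s\,\Gamma_0(2,p)$, which is $\Delta(p)$.
\item \emph{Check surjectivity.} Given such a $g$, the block-diagonal matrix $\mathrm{diag}(\det g^{-1}, g)$ lies in $\mathcal{I} \cap P_1$ and projects to $g$.
\end{enumerate}

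The same computation applies to $P_2$, using the block on coordinates $(e_1,e_3)$ with condition $a_{31} \equiv 0$. It applies to $P_3$, using the block on $(e_1,e_2)$ with condition $a_{21} \equiv 0$. It also applies to the plane stabilizers $P_1^T, P_2^T, P_3^T$, whose $\GL(2)$ Levi block is the action on the plane $\langle e_2,e_3\rangle$, $\langle e_1,e_3\rangle$, $\langle e_1,e_2\rangle$ respectively. In each case the relevant off-diagonal entry must be $\equiv 0 \pmod p$, and the remaining entries of $\mathcal{I}\cap P$ lie in the unipotent radical or are forced to vanish.

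Should the surviving congruence condition land in the upper-right entry rather than the lower-left under some convention (for instance, the paper swaps column vectors for row vectors), I would conjugate by $w = \begin{pmatrix} 0&1\\1&0\end{pmatrix}$. This conjugation carries $\Gamma^0(2,p)$ to $\Gamma_0(2,p)$ and fixes $s$ up to sign, since $wsw^{-1} = -s$ and $-I \in \Gamma_0(2,p)$, so the image is still conjugate to $\Delta(p)$.

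The main obstacle I expect is bookkeeping rather than any deep step. First, one must pin down correctly which $2\times 2$ block is the Levi for each of the six parabolics, including the row/column convention used in \cref{prop:AGG-2.3}. Second, one must verify that the projection is really onto, and not just into, $\Delta(p)$. The key point there is the determinant $\pm1$ freedom supplied by the $\GL(1)$ factor, which is exactly what produces $s$ and hence $\Delta(p)$ rather than $\Gamma_0(2,p)$. I would handle this by writing the block-diagonal lift explicitly in each of the six cases.
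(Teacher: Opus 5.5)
Your proposal is correct and follows essentially the same route as the paper: apply \cref{prop:AGG-2.3} with the six representatives from \cref{lem:A}, then identify each Levi projection $\mathcal{I}(P)$ with $\Delta(p)$ by reading off the single surviving congruence condition, which is the lower-left entry of the relevant $2\times 2$ block. Your explicit lift $\mathrm{diag}(\det g^{-1}, g)$ spells out the surjectivity onto $\Delta(p)$ that the paper leaves implicit in its identification $\mathcal{I}(P) = \mathcal{I}\cap L(P)$.
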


\begin{proof}
We apply \cref{prop:AGG-2.3}.  In contrast to the $\Gamma_0(3, p)$ setting (see \cite{AGG84}*{Proposition 3.10}), we observe that $H^1(\mathcal{V}, \mathbb{C})$ is nontrivial from \cref{fig:Tits}.  The next elements that we need to understand are $H^1_\mathrm{cusp} (\mathcal{I}(P), \mathbb{C})$ for each $P$ given in \cref{lem:A}, the representatives of $\mathcal{I}$-orbits of maximal parabolic subgroups.  Let $\mathcal{I}(P)$ be the projection of $\mathcal{I}$ onto the Levi component of $P$, denoted $L(P)$.  We start by noting that $\mathcal{I}(P) = \mathcal{I} \cap L(P)$.  Moreover, we see that $L(P_i) = L(P_i^T)$ for $i \in \{1, 2, 3\}$.  In particular, we have
\[L(P_1) = L(P_1^T) = 
\left\{\begin{pmatrix}
    * & 0 & 0 \\
    0 & * & * \\
    0 & * & * 
\end{pmatrix} \right\}, \]

\[L(P_2) = L(P_2^T) = 
\left\{\begin{pmatrix}
    * & 0 & * \\
    0 & * & 0 \\
    * & 0 & * 
\end{pmatrix} \right\}, \]

\[L(P_3) = L(P_3^T) = 
\left\{\begin{pmatrix}
    * & * & 0 \\
    * & * & 0 \\
    0 & 0 & * 
\end{pmatrix} \right\}.\]
Therefore, 
\[\mathcal{I}(P_1) = \mathcal{I} \cap L(P_1) = \mathcal{I} \cap L(P_1^T) = \mathcal{I}(P_1^T) = \left\{ \begin{pmatrix}
    * & 0 & 0 \\
    0 & \textcolor{-Cerulean}{*} & \textcolor{-Cerulean}{*} \\
    0 & \textcolor{-Cerulean}{0} & \textcolor{-Cerulean}{*} 
\end{pmatrix} \pmod p \right\},\]
\[\mathcal{I}(P_2) = \mathcal{I}\cap L(P_2) = \mathcal{I} \cap L(P_2^T) = \mathcal{I}(P_2^T) = \left\{ \begin{pmatrix}
    \textcolor{-Cerulean}{*} & 0 & \textcolor{-Cerulean}{*} \\
    0 & * & 0 \\
    \textcolor{-Cerulean}{0} & 0 & \textcolor{-Cerulean}{*}
\end{pmatrix} \pmod p \right\},\]
\[\mathcal{I}(P_3) = \mathcal{I} \cap L(P_3) = \mathcal{I} \cap L(P_3^T) = \mathcal{I}(P_3^T) = \left\{ \begin{pmatrix}
    \textcolor{-Cerulean}{*} & \textcolor{-Cerulean}{*} & 0 \\
    \textcolor{-Cerulean}{0} & \textcolor{-Cerulean}{*} & 0 \\
    0 & 0 & *
\end{pmatrix} \pmod p \right\}.\]
For each $\mathcal{I}(P)$, the orange entries highlight the copy of $\Delta(p)$ (see \cite{AGG84}*{Def.\ 3.10}) embedded in the corresponding Levi component.  Hence, $\mathcal{I}(P) \simeq \Delta(p)$ for all six $\mathcal{I}(3,p)$-orbits of maximal proper parabolic subgroups given in \cref{lem:A}.  This completes the proof. 
\end{proof}

\begin{lemma} 
    The dimension of the homology of the boundary of $X^\text{BS}_\mathcal{I}$ is given by the following formula:
    \begin{equation}\label{eq:boundary}
    \dim H_3(\partial X_\mathcal{I}^\text{BS}, \mathbb{C}) = 1 + 6\,\dim S_2(\Gamma_0(2, p)).
    \end{equation}
\end{lemma}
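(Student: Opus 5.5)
The plan is to take dimensions on both sides of \cref{prop:B}. This gives
\[
\dim H_3(\partial X_{\mathcal{I}}^\text{BS}, \mathbb{C}) = \dim H^1(\mathcal{V}, \mathbb{C}) + 6\,\dim H^1_\mathrm{cusp}\big(\Delta(p), \mathbb{C}\big).
\]
It therefore suffices to establish two facts: that $\dim H^1(\mathcal{V},\mathbb{C}) = 1$, and that $\dim H^1_\mathrm{cusp}(\Delta(p),\mathbb{C}) = \dim S_2(\Gamma_0(2,p))$.

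For the first term, I read off the structure of $\mathcal{V}$ from \cref{fig:Tits} and \cref{lem:A}. There are six vertices, $P_1,P_2,P_3,P_1^T,P_2^T,P_3^T$, and six edges, one for each minimal parabolic orbit $P_i\cap P_j^T$ with $i\neq j$. Each vertex $P_i$ is joined to exactly the two vertices $P_j^T$ with $j\ne i$, and each vertex has degree $2$. The graph is connected: for instance, $P_1 - P_2^T - P_3 - P_1^T - P_2 - P_3^T - P_1$ is a closed walk through all vertices. So $\mathcal{V}$ is a hexagon, homotopic to a circle, and has Euler characteristic $6-6=0$ with $H^0$ one-dimensional. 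Hence $\dim H^1(\mathcal{V},\mathbb{C})=1$. This step is a direct combinatorial check.

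For the second term, I would follow the argument of \cite{AGG84} around their Definition~3.10 and Proposition~3.11, where $H^1_\mathrm{cusp}(\Delta(p),\mathbb{C})$ is identified for the $\Gamma_0(3,p)$ boundary. The key steps are as follows.
\begin{enumerate}
\item Since $\Gamma_0(2,p)$ has index $2$ in $\Delta(p)$, the Hochschild--Serre spectral sequence with $\mathbb{C}$ coefficients gives $H^1(\Delta(p),\mathbb{C}) = H^1(\Gamma_0(2,p),\mathbb{C})^{\langle s\rangle}$. The same restriction holds for the cuspidal part, because conjugation by $s$ permutes the cusps.
\item By Eichler--Shimura, $H^1_\mathrm{cusp}(\Gamma_0(2,p),\mathbb{C}) \simeq S_2(\Gamma_0(2,p)) \oplus \overline{S_2(\Gamma_0(2,p))}$.
\item The involution induced by $s$, which on the upper half plane is $z\mapsto -\bar z$, interchanges the holomorphic and antiholomorphic pieces via $f\mapsto \overline{f(-\bar z)}$. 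Its $+1$-eigenspace therefore has dimension exactly $\dim S_2(\Gamma_0(2,p))$.
\end{enumerate}

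The main obstacle is the third step: confirming that the $s$-action on cuspidal cohomology really swaps the two Eichler--Shimura summands, so that the invariants are exactly half-dimensional. If the precise sign conventions give trouble, I would fall back on citing \cite{AGG84}*{Prop.\ 3.11}, which records exactly this equality. Combining the two computations then gives $1 + 6\dim S_2(\Gamma_0(2,p))$.
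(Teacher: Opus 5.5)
Your proposal is correct and matches the paper's proof. You take dimensions in \cref{prop:B}, compute $b_1(\mathcal{V}) = 6-6+1 = 1$ for the hexagon, and use $\dim H^1_{\mathrm{cusp}}(\Delta(p),\mathbb{C}) = \dim S_2(\Gamma_0(2,p))$, which the paper simply cites from \cite{AGG84}*{Prop.\ 3.11}.

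Your extra sketch of that proposition is sound, and your sign worry is moot. Since $z\mapsto -\bar z$ is antiholomorphic, pullback sends $f(z)\,dz$ to $-f(-\bar z)\,d\bar z$ and so swaps the two Eichler--Shimura summands. An involution that swaps two complementary summands has $+1$- and $-1$-eigenspaces of equal dimension, here $\dim S_2(\Gamma_0(2,p))$, whatever the sign conventions.
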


\begin{proof}
By \cref{fig:Tits}, we see that $\mathcal{V}$ is a connected graph with six edges and six vertices.  Thus, the dimension of $H^1 (\mathcal{V}, \mathbb{C})$, given by the first Betti number of $\mathcal{V}$, is 
    \[\dim H^1 (\mathcal{V}, \mathbb{C}) = b_1 = \# \text{Edges} - \#\text{Vertices} + 1 = 1.\]
Furthermore, by \cite{AGG84}*{Proposition 3.11}, we have
    \[\dim H^1_\mathrm{cusp} (\Delta(p), \mathbb{C}) = \dim S_2(\Gamma_0(2, p)).\]
Combining these results with \cref{prop:B} gives the following dimension formula for the homology of the boundary of $X_\mathcal{I}^\text{BS}$:
\begin{equation*} 
    \dim H_3(\partial X_\mathcal{I}^\text{BS}, \mathbb{C}) = 1 + 6\,\dim S_2(\Gamma_0(2, p)). \qedhere
\end{equation*}
\end{proof}
\noindent With \cref{eq:boundary} in hand, we can now invoke \cref{prop:AGG-2.1} to find the dimension of the cuspidal cohomology in the $\mathcal{I}(3, p)$ setting: 
\[\dim H^3_\text{cusp}(\mathcal{I}(3,p), \mathbb{C}) = \dim H_3(\mathcal{I}(3,p), \mathbb{C}) - (1 + 6\,\dim S_2(\Gamma_0(2, p))).\]

Being able to compute the dimension of the cuspidal cohomology is only the first step.  We then wish to compute the action of Hecke operators on any previously-unknown classes appearing in the cuspidal cohomology.  However, unlike the $\Gamma_0(3, p)$ setting where we can compute characteristic polynomials of the Hecke operators $E_\ell$ and $F_\ell$ (see \cref{sec:prelims-hecke_operators}) on the cuspidal cohomology directly, in the $\mathcal{I}(3, p)$ setting, we have thus far been unable to generalize the necessary results to make the same process possible (see the author's thesis for partial progress towards this goal).  

As a result, we have to compute the action of $E_\ell$ (resp.\ $F_\ell$) on the full homology space $H_3(\mathcal{I}(3, p), \mathbb{C})$.  Therefore, the resulting characteristic polynomial, which we will denote $\varphi_\ell^\text{full}$, is comprised of factors that come from the boundary cohomology, which we will denote $\varphi_\ell^\text{bound}$, as well as factors from the cuspidal cohomology, which we will denote $\varphi_\ell^\text{cusp}$, i.e.\ $\varphi_\ell^\text{full} = \varphi_\ell^\text{bound} \varphi_\ell^\text{cusp}$. In \cref{sec:iwahori-boundary}, we explain how to find the factors in $\varphi_\ell^\text{bound}$ and in \cref{sec:iwahori-cusp}, we discuss how to isolate any previously-unseen classes from $\varphi_\ell^\text{cusp}$.

\subsection{Understanding the Boundary}
\label{sec:iwahori-boundary}
To isolate the factors in $\varphi_\ell^\text{bound}$, we examine the polynomial $\varphi_\ell^\text{full}$ for factors that correspond to self-dual cusp forms in $S_2(\Gamma_0(2, p))$.  The roots of these factors can be written in terms of the Hecke eigenvalues associated with a self-dual cusp form, which follows from work of Harder \cite{Har91}.  One can explicitly determine these factors by examining how the Satake parameters from the $\mathrm{GL}(2)$ cusp forms embed into $\mathrm{GL}(3)$. 

Consider the embeddings $\psi_1$ and $\psi_2$ from $\mathrm{GL}(2)$ into $L(P)$, the Levi component of the maximal parabolic subgroup $P = \Stab\langle e_1, e_2 \rangle$, corresponding to the $3$-dimensional representations of $\mathrm{GL}(2)$
\begin{align*}
\hat{\rho}_1 & = (\det \otimes \std) \oplus \triv, \\
\hat{\rho}_2 & = \std \oplus \det{^2}.
\end{align*}
By the theory of parabolic induction, an automorphic representation of $\mathrm{GL}(2)$ and a character of $\mathrm{GL}(1)$, viewed as a representation of $L(P)$, give rise to an automorphic representation of $\mathrm{GL}(3)$.  The parabolic induction associated with $\psi_1$ and $\psi_2$ take a weight $2$ cusp form to an automorphic representation of $\mathrm{GL}(3)$ with trivial weight (see \cite{Clo90}).  

Let $\rho_0$ denote the $2$-dimensional Galois representation attached to a self-dual cusp form $\hat{f} \in S_2(\Gamma_0(2, p))$.  Let $s_\ell, t_\ell$ be $\ell$-Satake parameters for $\hat{f}$, i.e.\ $s_\ell, t_\ell \in \mathbb{R}$ are the eigenvalues of $\rho_0(\Frob_\ell)$, the image of the Frobenius element at $\ell$ under the Galois representation $\rho_0$ associated to $\hat{f}$.  Applying the embeddings $\psi_1$ and $\psi_2$ to the diagonal matrix 
\[\begin{pmatrix}
    s & 0 \\
    0 & t
\end{pmatrix},\]
we find
\[\begin{pmatrix}
    s &  \\
     & t
    \end{pmatrix}
    \xmapsto{\quad \psi_1 \quad}
    \begin{pmatrix}
    s^2 t & &  \\
    & s t^2 & \\
    & & 1
\end{pmatrix},\]
\[\begin{pmatrix}
    s &  \\
     & t
    \end{pmatrix}
    \xmapsto{\quad \psi_2 \quad}
    \begin{pmatrix}
    s & &  \\
    & t & \\
    & & (st)^2
\end{pmatrix}.\]
Knowing the embeddings explicitly, we can calculate the right-hand side of \cref{eq:hecke-poly}, thus allowing us to compute the eigenvalues appearing on the left-hand side, which are roots of the factors appearing in $\varphi_\ell^\text{bound}$.

For the sake of simplicity, since we only consider $\mathrm{GL}(2)$ and $\mathrm{GL}(3)$, we will label the eigenvalues once and for all.  Recall that $a(\ell, k)$ is the eigenvalue of the Hecke operator $T(\ell, k)$, which corresponds to the $n \times n$-matrix with the first $n-k$ diagonal entries equal to $1$ and the last $k$ diagonal entries equal to $\ell$ (see \cref{sec:prelims-hecke_operators}).  In the $\mathrm{GL}(2)$ setting, we let $a_\ell$ denote $a(\ell, 1)$.   In the $\mathrm{GL}(3)$ setting, we let $e_\ell = a(\ell, 1)$ and $f_\ell = a(\ell, 2)$.  Note $e_\ell$ and $f_\ell$ are so named to correspond to the special Hecke operators $E_\ell$ and $F_\ell$ described in \cref{sec:prelims-hecke_operators}. 

Let us start by using \cref{eq:hecke-poly} for $n = 2$ to write $a_\ell$ and $\ell$ in terms of the Satake parameters $s, t$:
\begin{align*}
1 - a_\ell X + \ell X^2 & = \det(I_{2 \times 2} - \rho_0(\Frob_\ell)X)\\
& = (1 - sX)(1 - tX) \\
& = 1 - (s + t)X + st X^2.
\end{align*}
Therefore, $st = \ell$ and $s + t = a_\ell$. 

We can now use \cref{eq:hecke-poly} for $n = 3$ to calculate $e_\ell$ and $f_\ell$ in terms of $a_\ell$ and $\ell$ for the representations 
\begin{align*}
\rho_1 & = \psi_1 \circ \rho_0 \colon G_\mathbb{Q} \to \mathrm{GL}(3, \mathbb{Q}_p),\\
\rho_2 & = \psi_2 \circ \rho_0 \colon G_\mathbb{Q} \to \mathrm{GL}(3, \mathbb{Q}_p).
\end{align*}
Starting with $\rho_1$, we compute:
\begin{align*}
    1 - e_\ell X + \ell f_\ell X^2 - \ell^3 X^3 & = \det(I_{3 \times 3} - \rho_1(\Frob_\ell)X) \\
    & = (1 - s^2 t X) (1- s t^2 X) (1 - X) \\
    & = 1 - (s^2 t+ s t^2 + 1)X + (s^2 t + s t^2 + s^3 t^3)X^2 - 
    s^3 t^3 X^3.
\end{align*}
First, we solve for $e_\ell$, leveraging the fact that $a_\ell = s + t$ and $\ell = st$: 
\begin{align*}
e_\ell & = s^2 t + st^2 + 1 \\
& = (st)(s + t) + 1 \\
& = \ell a_\ell + 1.
\end{align*}
Similarly, we can solve for $f_\ell$:
\begin{align*}
f_\ell & = \frac{s^2 t + s t^2 + s^3 t^3}{\ell} \\
& = \frac{(s t)(s + t + s^2 t^2)}{\ell} \\
& = a_\ell + \ell^2.
\end{align*}
We can then perform the same process for $\rho_2$:
\[
    1 - e_\ell X + \ell f_\ell X^2 - \ell^3 X^3  = \det(I_{3 \times 3} - \rho_2(\Frob_\ell)X),
\]
which yields 
\[
e_\ell = a_\ell + \ell^2, \qquad f_\ell = 1 + \ell a_\ell.
\]
We note that the value for $e_\ell$ attached to $\rho_1$ is the same as the value for $f_\ell$ attached to $\rho_2$.  Similarly, we see that the value for $f_\ell$ attached to $\rho_1$ is the same as the value for $e_\ell$ attached to $\rho_2$.  Further, recall that the embeddings $\psi_1$ and $\psi_2$ are defined with respect to the parabolic subgroup $P = \Stab\langle e_1, e_2 \rangle$.  By \cref{lem:A}, we know that there are six $\mathcal{I}(3, p)$-orbits of maximal parabolic subgroups, occurring in three pairs $P, P^T$.  As seen in the proof of \cref{prop:B}, each pair shares a Levi component, i.e.\ $L(P) = L(P^T)$.  Computational evidence in \cref{sec:iwahori-hecke_comp} shows that we have three copies of the factors arising from $\psi_1$ and $\psi_2$.  We attribute these copies to the three orbits of Levi components for the maximal parabolic subgroups.

In \cref{sec:iwahori-hecke_comp}, we observe from computational data that the polynomial $\varphi_\ell^\text{bound}$ also contains the linear factor $(T - (\ell^2 + \ell + 1))$.  We note that there are exactly $\ell^2 + \ell + 1$ single coset representatives $B_i$ for $E_\ell$ (resp.\ $F_\ell$).  From \cref{prop:B}, we know that this linear factor must be associated to $H^1(\mathcal{V}, \mathbb{C})$, but do not yet have an explanation as to why the root is precisely the number of coset representatives for the Hecke operators.  In fact, the value $\ell^2 + \ell + 1$ being preserved is somewhat surprising given that during the computation, the reduction algorithm is used to turn the $\ell^2 + \ell + 1$ modular symbols $[B_i]$ into sums of larger collections of unimodular symbols. 

\subsection{Additional Cuspidal Factors}
\label{sec:iwahori-cusp}
We now shift our focus to understanding the factors of $\varphi_\ell^\text{cusp}$ coming from forms of lower level and forms from smaller-rank groups.  The first such factor comes from the symmetric square lifting, originally studied in work of Gelbart and Jacquet \cite{GJ76}.  In particular, we are interested in the $3$-dimensional Galois representation
\[\rho_3 = \psi_3 \circ \rho_0 \colon G_\mathbb{Q} \to \mathrm{GL}(3, \mathbb{Q}_p),\]
where $\psi_3$ is the symmetric square homomorphism, which is the adjoint representation of $\mathrm{GL}(2)$ on the $2 \times 2$ matrices of trace $0$ twisted by the determinant (see \cite{AS86}*{Section 3.4}):
\[
\begin{pmatrix}
    s &  \\
     & t
    \end{pmatrix}
    \xmapsto{\quad \psi_3 \quad}
    \begin{pmatrix}
    s^2 & &  \\
    & st & \\
    & & t^2
\end{pmatrix}.
\]
Once again, we are letting $s, t \in \mathbb{R}$ denote Satake parameters for a self-dual cusp from in $S_2(\Gamma_0(2, p))$. 

In \cite{AT99}, the authors work out an expression for $e_\ell$ and $f_\ell$ in terms of $a_\ell$ and $\ell$.  We give additional details below, specifying to trivial weight, using the same strategy as we did for the boundary factors. Inputting $\rho_3$ into \cref{eq:hecke-poly}, we have
\[
1 - e_\ell X + \ell f_\ell X^2 - \ell^3 X^3  = \det(I_{3 \times 3} - \rho_3(\Frob_\ell)X).
\]
Invoking the fact that $a_\ell = s + t$ and $\ell = st$, we solve for $e_\ell$ and $f_\ell$ to find:
\[e_\ell = f_\ell = a_\ell^{\ 2} - \ell.\]

In addition to the factor coming from the symmetric square lifting, $\varphi_\ell^\text{cusp}$ contains a factor for each cusp form of level $\Gamma_0(3, p)$, but only for $p$ at which such a cusp form exists.  There is a natural embedding from $\mathcal{I}(3, p)$ into $\Gamma_0(3, p)$.  This induces a map on the cohomology, and so we expect to see the characteristic polynomial of $E_\ell$ (resp.\ $F_\ell$) on $H^3_\text{cusp}(\Gamma_0(3, p), \mathbb{C})$ appear as a factor in $\varphi_\ell^\text{cusp}$.  Computational evidence shows that three copies of this characteristic polynomial appear (see \cref{sec:iwahori-hecke_comp}).  This leads to the following conjecture. 

\begin{conjecture}
    Let $\pi$ be a unitary, admissible, $\mathbb{C}$-representation of $\mathrm{GL}(3, \mathbb{Q}_p)$ with the space of vectors fixed by $\Gamma_0(3,p)$, denoted $\pi^{\Gamma_0(3, p)}$, $1$-dimensional.  Then, $\pi^{\mathcal{I}(3, p)}$, the space of vectors fixed by $\mathcal{I}(3,p)$, is $3$-dimensional. 
\end{conjecture}

\section{Computational Methods} \label{sec:iwahori-comp}
The code for the computations described in the following section is implemented in the computer algebra system \textsc{Magma} \cite{Magma}.  The scripts and related data can be found in the GitHub repository \cite{Iwahori-Code}.  The computations were performed on Wesleyan University's High Performance Compute Cluster. 

\subsection{Cuspidal Cohomology Dimension Computation}
\label{sec:iwahori-dim_comp}
Recall that in order to compute the dimension of the cuspidal cohomology $H_\text{cusp}^3 (\mathcal{I}(3,p), \mathbb{C})$, we can leverage \cref{prop:AGG-2.1}.  Restating the proposition to include the formula for the dimension of the boundary component of $X_{\mathcal{I}(3, p) }^\text{BS}$ given in \cref{eq:boundary} yields: 
\[\dim H_\text{cusp}^3 (\mathcal{I}(3,p), \mathbb{C}) = \dim H_3(\mathcal{I}(3,p), \mathbb{C}) - \big(1 + 6\,\dim S_2(\Gamma_0(2, p))\big).\]
The dimension of the space of weight 2 cusp forms $S_2(\Gamma_0(2, p))$ is known.  Therefore, we only need to compute the dimension of the homology $H_3(\mathcal{I}(3,p), \mathbb{C})$.  To compute the dimension of $H_3(\mathcal{I}(3,p), \mathbb{C})$, we first construct a matrix $M$ whose kernel is the vector space $W(\mathcal{I}(3,p))$, which is isomorphic to $H_3(\mathcal{I}(3,p), \mathbb{C})$ by \cref{thm:AGG-3.2}.  We then compute the corank of $M$.

Unlike the $\Gamma_0(3, p)$ setting, however, there are lifts from $\mathrm{GL}(2)$ cusp forms and lower-level $\mathrm{GL}(3)$ cusp forms that contribute to the dimension of $H_\text{cusp}^3 (\mathcal{I}(3,p), \mathbb{C})$, as discussed in \cref{sec:iwahori-cusp}.  In \cref{sec:iwahori-hecke_comp}, we explain how we identified these additional contributions in practice and how we isolated any new classes arising in $H_\text{cusp}^3 (\mathcal{I}(3,p), \mathbb{C})$. As a preview, to find the dimension of the new classes, we computed 
\begin{equation} \label{eq:iwahori-new}
\begin{aligned}
\dim H^3_{\text{new}}(\mathcal{I}(3,p), \mathbb{C}) & = \dim H_3(\mathcal{I}(3,p), \mathbb{C})  - 1 - 7 \dim S_2(\Gamma_0(2, p))\\
& \qquad - 3 \dim H^3_{\text{cusp}}(\Gamma_0(3,p), \mathbb{C}).
\end{aligned}
\end{equation}

To build the matrix $M$, we start by fixing a set of coset representatives for $\mathrm{SL}(3, \mathbb{Z}) / \mathcal{I}(3, p)$.  Let $\mathcal{A}$ denote this set and let $N = (p^2 + p +1)(p+1)$ denote the size of $\mathcal{A}$.  In our construction, we let
\[
\mathcal{A} = \left\{
\begin{pmatrix}
    0 & 0 & 1 \\
    0 & 1 & 0 \\
    1 & 0 & 0
\end{pmatrix}\right\} \cup \mathcal{A}_1 \cup \mathcal{A}_2 \cup \mathcal{A}_3,\]
where 
\begin{align*}
\mathcal{A}_1 & = 
\left\{
\begin{pmatrix}
    0 & 0 & 1 \\
    1 & 0 & 0 \\
    i & 1 & 0
\end{pmatrix}, 
\begin{pmatrix}
    0 & 1 & 0 \\
    0 & i & 1 \\
    1 & 0 & 0
\end{pmatrix}
\colon 0 \leq i < p
\right\}, \\
\mathcal{A}_2 & =
\left\{
\begin{pmatrix}
    0 & 1 & 0 \\
    1 & 0 & 0 \\
    i & j & 1
\end{pmatrix}, 
\begin{pmatrix}
    1 & 0 & 0 \\
    i & 0 & 1 \\
    j & 1 & 0
\end{pmatrix}
\colon 0 \leq i, j < p
\right\}, \\
\mathcal{A}_3 & = 
\left\{
\begin{pmatrix}
    1 & 0 & 0 \\
    i & 1 & 0 \\
    j & k & 1
\end{pmatrix}
\colon 0 \leq i, j, k < p
\right\}.
\end{align*}

By condition (1) of \cref{def:AGG-3.1}, a function $f \in W(\mathcal{I}(3,p))$ is determined by its values on $\mathcal{A}$.  We construct $M$ such that, for a given $\gamma \in \mathcal{A}$, the rows of $M$ corresponding to $\gamma$ encode conditions (2)-(4) with $m = \gamma$.  We observe that an $\mathcal{I}(3,p)$ invariant function satisfies conditions (2)-(4) for all $m$ if and only if it does for all $\gamma \in A$.  Therefore in practice, we construct three $N \times N$ submatrices.  The submatrices correspond to the separate conditions (2), (3), and (4), i.e.\ the action of $\sigma_1$, $\sigma_2$, and $h$ on elements $\gamma \in \mathcal{A}$. 

\begin{enumerate}
\setcounter{enumi}{1}
    \item Condition (2) specifies that 
        \[f(\gamma) + f(\sigma_1 \gamma) = 0.\]
    To build the matrix $M_{\sigma_1}$, whose rows and columns are indexed by the elements in $\mathcal{A}$, we start by creating a list of pairs $(\gamma, \gamma')$ such that 
    \[\sigma_1 \gamma \in \gamma' \mathcal{I}.\]
    Then, we construct the rows of $M_{\sigma_1}$ by ranging over all $\gamma \in \mathcal{A}$, placing a 1 in the $\gamma$ column and a 1 in the $\gamma'$ column. In the case when $\gamma = \gamma'$, a 2 is placed in the $\gamma$ column.
    \item Condition (3) specifies that 
        \[f(\gamma) - f(\sigma_2 \gamma) = 0. \]
    To build the matrix $M_{\sigma_2}$, we again start by creating a list of pairs $(\gamma, \gamma')$ such that 
    \[\sigma_2 \gamma \in \gamma' \mathcal{I}.\]
    Then, we construct the rows of $M_{\sigma_2}$ by ranging over all $\gamma \in \mathcal{A}$, placing a 1 in the $\gamma$ column and a $-1$ in the $\gamma'$ column. In the case when $\gamma = \gamma'$, a placeholder row of zeroes is added.
    \item Condition (4) specifies that 
        \[f(\gamma) + f(h\gamma) + f(h^2\gamma) = 0.\]
    To build the matrix $M_h$, we first create a list of pairs $(\gamma_0, \gamma_1)$ such that 
    \[h \gamma_0 \in \gamma_1 \mathcal{I}.\]
    We then create a list of pairs $(\gamma_0, \gamma_2)$ such that 
    \[h^2 \gamma_0 \in \gamma_2 \mathcal{I}.\]
    Finally, we construct the rows of $M_h$ by ranging over all $\gamma_0 \in \mathcal{A}$, placing a 1 in the $\gamma_0$ column, a $1$ in the $\gamma_1$ column, and a $1$ in the $\gamma_2$ column. In the case when $\gamma_0 = \gamma_1 = \gamma_2$, a 3 is placed in the $\gamma_0$ column.
\end{enumerate}

By stacking $M_h, M_{\sigma_1}, M_{\sigma_2}$ vertically, we create the matrix $M$ whose kernel is $W(\mathcal{I}(3, p))$.  We then computed the rank of $M$ using the generic \texttt{Rank} function in \textsc{Magma}. From the rank calculation, we could next find the dimension of the kernel, i.e.\ the dimension of $H_3(\mathcal{I}(3,p), \mathbb{C})$, from which we could compute the dimension of $H^3_\text{new}(\mathcal{I}(3,p), \mathbb{C})$ using \cref{eq:iwahori-new}.  For all primes $p \leq 227$, we only found $H^3_\text{new}(\mathcal{I}(3, p), \mathbb{C})$ to have nonzero dimension for $p \in \{19, 43, 127\}$. \cref{table:3} reports the dimensions  of $H_3(\mathcal{I}, \mathbb{C})$, $H^3_\text{cusp}(\mathcal{I}, \mathbb{C})$, and $H^3_\text{new}(\mathcal{I}, \mathbb{C})$ for levels $\mathcal{I}(3, p)$ with nonzero $H^3_\text{new}(\mathcal{I}, \mathbb{C})$ dimension.

\begin{table}
    \small
    \caption{Dimensions of $H_3(\mathcal{I}, \mathbb{C})$, $H^3_\text{cusp}(\mathcal{I}, \mathbb{C})$, and $H^3_\text{new}(\mathcal{I}, \mathbb{C})$ for levels $\mathcal{I}(3, p)$ with nonzero $H^3_\text{new}(\mathcal{I}, \mathbb{C})$ dimension.
    }
    \label{table:3}
    \arraycolsep=0.9em
    \def\arraystretch{1.2}
    \makebox[\textwidth][c]{
    $\begin{array}{c c c c}
        \toprule
        p & \dim H_3(\mathcal{I}(3,p), \mathbb{C}) & \dim H^3_\text{cusp}(\mathcal{I}(3,p), \mathbb{C}) & \dim H^3_\text{new}(\mathcal{I}(3,p), \mathbb{C})\\
        \midrule
        19 & 10 & 3 & 2 \\
        43 & 24 & 5 & 2 \\
        127 & 73 & 12 & 2\\
        \bottomrule
    \end{array}$
    }
\end{table}

We built $M$ over $\mathbb{Q}$ for all prime values $p < 230$.  We opted to work over $\mathbb{Q}$ instead of $\mathbb{C}$, to avoid any floating-point error.  One can similarly work over a large enough finite field, as described in \cite{AGM10}*{Section 2}, to leverage high-efficiency algorithms like \texttt{LinBox}.  However, then additional work is needed to understand the characteristic polynomials that result from the Hecke operator computation, having to lift polynomials over $\mathbb{F}_q$ to polynomials over $\mathbb{C}$ (see \cite{Por25}*{Section 4.2}).  Ultimately, working over a finite field only allowed us to check two additional values of $p$, so we instead focus on the simpler case working over $\mathbb{Q}$. 

In order to compute the Hecke operators, as described in the next section, we
needed an explicit basis for $W(\mathcal{I}(3,p))$. By construction, we could use the basis for the kernel of $M$. For $p \in \{19, 43\}$, we could compute this basis;
however, for $p = 127$, the matrix was too large for effective computation.
As a result, we were unable to produce a basis for level $\mathcal{I}(3, 127)$.


\subsection{Hecke Operators Computation} \label{sec:iwahori-hecke_comp}
With a basis for $W(\mathcal{I}(3,p))$ in hand, we can now focus on computing the action of Hecke operators $E_\ell$ and $F_\ell$ on $H_3(\mathcal{I}(3,p), \mathbb{C})$ (see \cref{sec:prelims-hecke_operators}).  Recall, these specific operators generate the entire Hecke algebra acting on $H_3(\mathcal{I}(3,p), \mathbb{C})$.  To compute the action of $E_\ell$ and $F_\ell$ on $H_3(\mathcal{I}(3,p), \mathbb{C})$, we first need a collection of single coset representatives $B_i$ for $A(\ell)$ and $B(\ell)$ respectively.  We used the following $\ell^2 + \ell + 1$ representatives for $A(\ell)$:
\[L_E \coloneqq
\left\{
\begin{pmatrix}
    1 & 0 & 0 \\
    0 & 1 & 0 \\
    a & b & \ell
\end{pmatrix}
\colon 
0 \leq a, b < \ell
\right\} \cup
\left\{
\begin{pmatrix}
    1 & 0 & 0 \\
    c & \ell & 0 \\
    0 & 0 & 1
\end{pmatrix}
\colon 
0 \leq c < \ell
\right\} \cup 
\left\{
\begin{pmatrix}
    \ell & 0 & 0 \\
    0 & 1 & 0 \\
    0 & 0 & 1
\end{pmatrix}
\right\}.
\]
We used the following $\ell^2 + \ell + 1$ representatives for $B(\ell)$:
\[
L_F \coloneqq
\left\{
\begin{pmatrix}
    1 & 0 & 0 \\
    a & \ell & 0 \\
    b & 0 & \ell
\end{pmatrix}
\colon 
0 \leq a, b < \ell
\right\} \cup
\left\{
\begin{pmatrix}
    \ell & 0 & 0 \\
    0 & 1 & 0 \\
    0 & c & \ell
\end{pmatrix}
\colon 
0 \leq c < \ell
\right\} \cup 
\left\{
\begin{pmatrix}
    \ell & 0 & 0 \\
    0 & \ell & 0 \\
    0 & 0 & 1
\end{pmatrix}
\right\}.
\]
We then followed the method described in \cite{AAC98}*{Lemma 9.1}, instead working with trivial coefficients as opposed to twisted coefficients.  We observe that this is a restatement of \cref{eq:mod-symbols}.

We begin by using the algorithm from \cite{vGvdKTV97}*{Section 2.10} to compute the collection of unimodular symbols $\{[Q_{ij}]\}$ that are homologous to $[B_i^{-1}]$ for each $B_i \in L_E$ (resp.\ $L_F$).  (Note that we are using $[Q_{ij}]$ to denote the unimodular symbol called $[M_{ij}]$ in \cite{AAC98}.)
\begin{remark}
We note that in the statement of \cite{AAC98}*{Lemma 9.1}, $\sum_j [M_{ij}]$ should be homologous to $[B_i^{-1}]$.
\end{remark} 
\noindent We construct a list $L_R$ containing all of the unimodular symbols that stem from these reductions.  One major benefit of this method is that we only need to perform the reduction step once.

Now, for every $Q_{ij} \in L_R$, we construct an $N \times N$ matrix $M_{ij}$.  To build the matrix $M_{ij}$, whose rows and columns are indexed by the elements in $\mathcal{A}$, we start by creating a list of pairs $(\gamma, \gamma')$ such that 
    \[Q_{ij} \gamma \in \gamma' \mathcal{I}.\]
Then, we construct the rows of $M_{ij}$ by ranging over all $\gamma \in \mathcal{A}$, placing a 1 in the corresponding $\gamma'$ column.  We next take the sum $R = \sum_{i,j} M_{ij} \mathcal{B}$, where $\mathcal{B}$ is the basis computed at the end of \cref{sec:iwahori-dim_comp}.  Finally, we solve the equation 
\[\mathcal{B}W = R\]
for $W$.  This gives the characteristic polynomial $\varphi_\ell^{\text{full}}$ for the action of Hecke operators $E_\ell$ (resp.\ $F_\ell$) on $H_3(\mathcal{I}(3,p), \mathbb{C})$.

Recall, the characteristic polynomial $\varphi_\ell^{\text{full}}$ has factors coming from the boundary cohomology and the cuspidal cohomology.  If the calculation from \cref{sec:iwahori-dim_comp} using \cref{eq:iwahori-new} suggested that the dimension of $H^3_\text{new}(\mathcal{I}(3,p), \mathbb{C})$ was nonzero, which was the case for $p \in \{19, 43, 127\}$, we then computed $\varphi_\ell^{\text{full}}$ for $\ell \in \{2, 3, 5, 7, 11\}$.  From the data for small $\ell$, we could isolate the factor $\varphi_\ell$ that corresponds to the action of $E_\ell$ (resp.\ $F_\ell$) on $H^3_\text{new}(\mathcal{I}, \mathbb{C})$, using the results from \cref{sec:iwahori-boundary,sec:iwahori-cusp}.  As a sanity check, we can verify that the splitting field generated by the eigenvalues $e_\ell$ (resp.\ $f_\ell$), i.e.\ the roots of $\varphi_\ell$, are either totally real or CM, which we know must be the case by \cite{APT91}*{Lemma 1.3}.  We now give an example of this process. 

\begin{example}
    For $p = 19$ and $\ell = 5$, the factored polynomial that results from acting the Hecke operator $E_5$ on $H_3(\mathcal{I}(3, 19), \mathbb{C})$ is 
    \[\varphi_\ell^\text{full} = (T - 31)(T - 28)^3(T - 16)^3 (T - 4) (T^2 - 2T + 16).\]
    Consulting the $L$-functions and modular forms database \cite{LMFDB}, we find a  self-dual cusp form of dimension $1$, weight $2$, and level $\Gamma_0(2, 19)$, [\href{https://www.lmfdb.org/ModularForm/GL2/Q/holomorphic/19/2/a/a/}{\texttt{19.2.a.a}}], with $a_5 = 3$.
    
    Using the results from \cref{sec:iwahori-boundary}, we find the trivial factor by computing 
    \[\ell^2 + \ell + 1 = 5^2 + 5 + 1 = 31.\]
    We can then compute $e_\ell$ for $\rho_1$ as 
    \[\ell a_\ell + 1 = 5(3) + 1 = 16.\]
    We finally compute $e_\ell$ for $\rho_2$ as 
    \[a_\ell + \ell^2 = 3 + 5^2 = 28.\]
    Therefore, the boundary factors are
    \[(T - 31)(T - 28)^3(T - 16)^3.\]
    Next, we can use the results from \cref{sec:iwahori-cusp} to determine $e_\ell$ for the symmetric square lifting $\rho_3$ to be 
    \[a_\ell^2 - \ell = 3^2 - 5 = 4.\]
    We note that there is no contribution from level $\Gamma_0(3,p)$, as the dimension of $H^3_\text{cusp}(\Gamma_0(3, 19), \mathbb{C})$ is zero. Therefore, the additional factors coming from the cuspidal cohomology are 
    \[(T-4).\]
    Removing these factors from $\varphi_\ell^\text{full}$, we are left with $\varphi_\ell$, the characteristic polynomial of $E_\ell$ on $H^3_\text{new}(\mathcal{I}(3, 19), \mathbb{C})$:
    \[\varphi_\ell = T^2 - 2T + 16.\]
    \hfill $\blacksquare$
\end{example} 

Once the new cuspidal factor has been identified, we can then begin computing directly on $H^3_\text{new}(\mathcal{I}(3,p), \mathbb{C})$. 
For the two levels on which we could perform Hecke operator computations, $\mathcal{I}(3, 19)$ and $\mathcal{I}(3, 43)$, \cref{table:4} reports the eigenvalues $e_\ell$ of $E_\ell$ for a fixed common eigenvector, the corresponding characteristic polynomials $\varphi_\ell$ of $E_\ell$ on $H^3_{\text{new}}(\mathcal{I}(3, p), \mathbb{C})$, and the field generated by the eigenvalues for $\ell < 70$.

\begin{table}
    \small
    \caption{Hecke eigenvalues $e_\ell$ for Hecke operators $E_\ell$ for a fixed common eigenvector and corresponding characteristic polynomials $\varphi_\ell$ on $H^3_{\text{new}}(\mathcal I(3,p), \mathbb{C})$.}
    \label{table:4}
    \arraycolsep=0.9em
    \def\arraystretch{1.2}
    $\begin{array}{ r | r l | r l }
        \toprule
        & \multicolumn{2}{c |}{p = 19} & \multicolumn{2}{c}{p = 43} \\
        & k \simeq \mathbb{Q}(\sqrt{-15}) \hfil &  \hfil \omega = \frac{1 + \sqrt{-15}}{2} & k \simeq \mathbb{Q}(\sqrt{-7}) \hfil & \hfil \omega = \frac{1 + \sqrt{-7}}{2}  \\
        \midrule
        \ell & e_\ell \hfil & \hfil \varphi_\ell & e_\ell \hfil & \hfil \varphi_\ell \\
        \midrule
        2 & \omega - 1 & T^2 + T + 4 & \omega - 2 & T^2 + 3T + 4 \\
        3 & -2\omega - 1 & T^2 + 4T + 19 & 1 & (T-1)^2 \\
        5 & -2\omega + 2 & T^2 - 2T + 16 & -4\omega - 1 & T^2 + 6T + 37 \\
        7 & 12 & (T-12)^2 & 7 & (T-7)^2 \\
        11 & -2 & (T+2)^2 & -6\omega + 8 & T^2 - 10T + 88 \\
        13 & 3 & (T-3)^2 & 12 & (T-12)^2 \\
        17 & -2\omega + 8 & T^2 - 14T + 64 & -12\omega - 2 & T^2 + 16T + 316 \\
        19 & -*- & -*- & -12\omega - 9 & T^2 + 30T + 477 \\
        23 & 8\omega - 21 & T^2 + 34T + 529 & 10\omega - 20 & T^2 + 30T + 400 \\
        29 & 2\omega - 15 & T^2 + 28T + 211 & -7 & (T+7)^2 \\
        31 & 3 & (T-3)^2 & -6\omega + 36 & T^2 - 66T + 1152 \\
        37 & 18\omega + 3 & T^2 - 24T + 1359 & -16\omega + 13 & T^2 - 10T + 473 \\
        41 & 10\omega + 23 & T^2 - 56T + 1159 & 20\omega + 2 & T^2 - 24T + 844 \\
        43 & 36\omega - 60 & T^2 + 84T + 6624 & -*- & -*- \\
        47 & -10\omega - 18 & T^2 + 46T + 904 & 8\omega - 7 & T^2 + 6T + 121 \\
        53 & -4\omega + 45 & T^2 - 86T + 1909 & 8\omega + 80 & T^2 - 168T + 7168 \\
        59 & 16\omega - 7 & T^2 - 2T + 961 & 24\omega + 17 & T^2 - 58T + 1849 \\
        61 & 48 & (T-48)^2 & 24\omega - 27 & T^2 + 30T + 1233 \\
        67 & 57 & (T-57)^2 & 14\omega - 2 & T^2 - 10T + 368 \\
        \bottomrule
    \end{array}$
\end{table}

Let $k$ denote the splitting field generated by the eigenvalues. For level $\mathcal{I}(3, 19)$, $k \simeq \mathbb{Q}(\sqrt{-15})$.  Similar to the observations made in the $\Gamma_0(3,p)$ setting in \cite{AGG84}*{Section 6}, we find that the primes that ramify in $\mathbb{Q}(\sqrt{-15})$ are $3$ and $5$, and $19$ is a square in both $\mathbb{Z}_3$ and $\mathbb{Z}_5$.  For level $\mathcal{I}(3, 43)$, $k \simeq \mathbb{Q}(\sqrt{-7})$.  We note that the only prime that ramifies in $\mathbb{Q}(\sqrt{-7})$ is $7$, and $43$ is a square in $\mathbb{Z}_7$.
Also, in line with the $\Gamma_0(3,p)$ observations, we see that the ring spanned by the images of Hecke operators in $k$ is $\mathbb{Z} +\mathbb{Z}\sqrt{D}$, where $D$ denotes the negative square free rational integer such that $k \simeq \mathbb{Q}(\sqrt{D})$.

We conclude by noting that the dimension of $H^3_{\text{new}}(\mathcal I(3, p), \mathbb{C})$ is two for both levels at which previously-unseen cuspidal classes appear.  There is no guarantee that the dimension remains constant as $p$ grows. 

\section*{Acknowledgments}
The author thanks David Pollack for helpful discussions during the preparation of the article.  The author also thanks Chris Williams for prompting this line of inquiry.  Finally, the author thanks Wesleyan University for computer time supported by the NSF under grant numbers CNS-0619508 and CNS-0959856.


\begin{bibdiv}
    \begin{biblist}

    \bib{AAC98}{article}{
       author={Allison, Gerald},
       author={Ash, Avner},
       author={Conrad, Eric},
       title={Galois representations, Hecke operators, and the mod-$p$
       cohomology of ${\rm GL}(3,\mathbf Z)$ with twisted coefficients},
       journal={Experiment. Math.},
       volume={7},
       date={1998},
       number={4},
       pages={361--390},
       issn={1058-6458},
       review={\MR{1678079}},
    }

    \bib{AGG84}{article}{
          author={Ash, Avner},
          author={Grayson, Daniel},
          author={Green, Philip},
           title={Computations of cuspidal cohomology of congruence subgroups of {${\rm SL}(3,{\bf Z})$}},
            date={1984},
            ISSN={0022-314X,1096-1658},
         journal={J. Number Theory},
          volume={19},
          number={3},
           pages={412\ndash 436},
             url={https://doi.org/10.1016/0022-314X(84)90081-7},
          review={\MR{769792}},
    }

    \bib{AGM10}{article}{
          author={Ash, Avner},
          author={Gunnells, Paul~E.},
          author={McConnell, Mark},
           title={Cohomology of congruence subgroups of {${\rm SL}_4(\bf Z)$}. {III}},
            date={2010},
            ISSN={0025-5718,1088-6842},
         journal={Math. Comp.},
          volume={79},
          number={271},
           pages={1811\ndash 1831},
             url={https://doi.org/10.1090/S0025-5718-10-02331-8},
          review={\MR{2630015}},
    }

    \bib{APT91}{article}{
       author={Ash, Avner},
       author={Pinch, Richard},
       author={Taylor, Richard},
       title={An $\widehat{A_4}$ extension of ${\bf Q}$ attached to a
       nonselfdual automorphic form on ${\rm GL}(3)$},
       journal={Math. Ann.},
       volume={291},
       date={1991},
       number={4},
       pages={753--766},
       issn={0025-5831},
       review={\MR{1135542}},
    }

    \bib{AR79}{article}{
          author={Ash, Avner},
          author={Rudolph, Lee},
           title={The modular symbol and continued fractions in higher dimensions},
            date={1979},
            ISSN={0020-9910,1432-1297},
         journal={Invent. Math.},
          volume={55},
          number={3},
           pages={241\ndash 250},
             url={https://doi.org/10.1007/BF01406842},
          review={\MR{553998}},
    }
    \bib{AS86}{article}{
       author={Ash, Avner},
       author={Stevens, Glenn},
       title={Cohomology of arithmetic groups and congruences between systems of
       Hecke eigenvalues},
       journal={J. Reine Angew. Math.},
       volume={365},
       date={1986},
       pages={192--220},
       issn={0075-4102},
       review={\MR{0826158}},
       doi={10.1515/crll.1986.365.192},
    }

    \bib{AT99}{article}{
       author={Ash, Avner},
       author={Tiep, Pham Huu},
       title={Modular representations of ${\rm GL}(3,{\bf F}_p)$, symmetric
       squares, and mod-$p$ cohomology of ${\rm GL}(3,{\bf Z})$},
       journal={J. Algebra},
       volume={222},
       date={1999},
       number={2},
       pages={376--399},
       issn={0021-8693},
       review={\MR{1727178}},
       doi={10.1006/jabr.1999.7988},
    }

    \bib{BSGW25}{misc}{
        author = {Barrera Salazar, Daniel},
        author = {Graham, Andrew},
        author = {Williams, Chris},
        Title = {Local-global compatibility and the exceptional zero conjecture for $\mathrm{GL}(3)$},
        Year = {2025},
        Eprint = {arXiv:2508.10225},
        note = {\href{https://arxiv.org/abs/2508.10225}{\tt arXiv:2508.10225}}
        }

    \bib{BS73}{article}{
       author={Borel, A.},
       author={Serre, J.-P.},
       title={Corners and arithmetic groups},
       journal={Comment. Math. Helv.},
       volume={48},
       date={1973},
       pages={436--491},
       issn={0010-2571},
       review={\MR{0387495}},
       doi={10.1007/BF02566134},
    }

    \bibitem[Magma]{Magma}
    W. Bosma, J. J. Cannon, C. Fieker, A. Steel (eds.), \textit{Handbook of Magma functions}, Version 2.28-14 (2024). \href{https://magma.maths.usyd.edu.au/magma/handbook/}{\tt https://magma.maths.usyd.edu.au/magma/handbook/}.

    \bib{Clo90}{article}{
   author={Clozel, Laurent},
   title={Motifs et formes automorphes: applications du principe de
   fonctorialit\'e},
   language={French},
   conference={
      title={Automorphic forms, Shimura varieties, and $L$-functions, Vol.\
      I},
      address={Ann Arbor, MI},
      date={1988},
   },
   book={
      series={Perspect. Math.},
      volume={10},
      publisher={Academic Press, Boston, MA},
   },
   isbn={0-12-176651-9},
   date={1990},
   pages={77--159},
   review={\MR{1044819}},
}

    \bib{GJ76}{article}{
       author={Gelbart, Stephen},
       author={Jacquet, Herv\'e},
       title={A relation between automorphic forms on ${\rm GL}(2)$ and ${\rm
       GL}(3)$},
       journal={Proc. Nat. Acad. Sci. U.S.A.},
       volume={73},
       date={1976},
       number={10},
       pages={3348--3350},
       issn={0027-8424},
       review={\MR{0412156}},
    }
    
    \bib{Gro96}{article}{
   author={Gross, Benedict H.},
   title={On the Satake isomorphism},
   conference={
      title={Galois representations in arithmetic algebraic geometry},
      address={Durham},
      date={1996},
   },
   book={
      series={London Math. Soc. Lecture Note Ser.},
      volume={254},
      publisher={Cambridge Univ. Press, Cambridge},
   },
   isbn={0-521-64419-4},
   date={1998},
   pages={223--237},
   review={\MR{1696481}},
   doi={10.1017/CBO9780511662010.006},
}

    \bib{Gun00}{article}{
       author={Gunnells, Paul E.},
       title={Modular symbols and Hecke operators},
       conference={
          title={Algorithmic number theory},
          address={Leiden},
          date={2000},
       },
       book={
          series={Lecture Notes in Comput. Sci.},
          volume={1838},
          publisher={Springer, Berlin},
       },
       isbn={3-540-67695-3},
       date={2000},
       pages={347--357},
       review={\MR{1850616}},
    }

    \bib{Har91}{article}{
       author={Harder, G\"unter},
       title={Eisenstein cohomology of arithmetic groups and its applications to
       number theory},
       conference={
          title={Proceedings of the International Congress of Mathematicians,
          Vol.\ I, II},
          address={Kyoto},
          date={1990},
       },
       book={
          publisher={Math. Soc. Japan, Tokyo},
       },
       isbn={4-431-70047-1},
       date={1991},
       pages={779--790},
       review={\MR{1159264}},
    }

    \bib{LS82}{article}{
          author={Lee, Ronnie},
          author={Schwermer, Joachim},
           title={Cohomology of arithmetic subgroups of {${\rm SL}\sb{3}$} at infinity},
            date={1982},
            ISSN={0075-4102,1435-5345},
         journal={J. Reine Angew. Math.},
          volume={330},
           pages={100\ndash 131},
             url={https://doi.org/10.1515/crll.1982.330.100},
          review={\MR{641814}},
    }

    \bib{Por25}{article}{
        author = {Porat, Zachary},
        title = {Computations directly on the cuspidal cohomology of congruence subgroups of $\mathrm{SL}(3, \mathbb{Z})$},
        date = {2025-11-12},
        journal = {Math. Comp.},
        url={https://doi.org/10.1090/mcom/4155},
        doi={10.1090/mcom/4155}
    }

    \bib{Iwahori-Code}{misc}{
          author={Porat, Zachary},
           title={GitHub repository for $\mathcal{I}(3,p)$ related scripts and data},
            note={Available at \href{https://github.com/zporat/GL3-Iwahori-C}{\texttt{https://github.com/zporat/GL3-Iwahori-C}} (updated 2026)},
            label={Por26}
    }

    \bib{LMFDB}{misc}{
      author={{The LMFDB Collaboration}},
       title={The {L}-functions and modular forms database},
        note={Available online at \href{https://www.lmfdb.org}{\tt https://www.lmfdb.org}},
        label = {LMFDB},
    }
    
    \bib{vGvdKTV97}{article}{
          author={van Geemen, Bert},
          author={van~der Kallen, Wilberd},
          author={Top, Jaap},
          author={Verberkmoes, Alain},
           title={Hecke eigenforms in the cohomology of congruence subgroups of {${\rm SL}(3,{\bf Z})$}},
            date={1997},
            ISSN={1058-6458,1944-950X},
         journal={Experiment. Math.},
          volume={6},
          number={2},
           pages={163\ndash 174},
             url={http://projecteuclid.org/euclid.em/1047650002},
          review={\MR{1474576}},
    }
    \end{biblist}
\end{bibdiv}
\end{document}